\begin{document}

\theoremstyle{plain}
\newtheorem{theorem}{Theorem}
\newtheorem{lemma}[theorem]{Lemma}
\newtheorem{cor}[theorem]{Corollary}

\newtheorem{prop}[theorem]{Proposition}

\theoremstyle{definition} 
\newtheorem{definition}{Definition}
\newtheorem{example}[theorem]{Example}
\newtheorem{conjecture}[theorem]{Conjecture}

\theoremstyle{remark} 
\newtheorem{remark}[theorem]{Remark}

\def \Z{{\mathbb Z}}
\def \N{{\mathbb N}}
\def \Q{{\mathbb Q}}
\def \R{{\mathbb R}}

\def\e{{\epsilon}}

\title{The congruence of Wolstenholme and generalized binomial coefficients related to Lucas sequences} 

\author{
{\sc Christian ~Ballot} \\
{D\'epartement de Math\'ematiques et M\'ecanique}\\
{Universit\'e de Caen} \\
{F14032 Caen Cedex, France} \\
{christian.ballot@unicaen.fr}\\}

\date{}

\maketitle
\begin{abstract} Wolstenholme's congruence says that  
$\binom{2p-1}{p-1}\equiv1\pmod{p^3}$ for all primes $p\ge5$.  
Kimball and Webb established an analogue of the congruence of Wolstenholme using Fibonomial 
coefficients. This note answers the question: `Is there a common generalization  
to the Wolstenholme and the Kimball and Webb congruences?'.  
Tinted by a positive answer, valid for all fundamental Lucas sequences, we go up the ladder. 
We give a broad generalization of several congruences such as Ljunggren et al's 
$\binom{kp}{\ell p}\equiv\binom{k}\ell\pmod{p^3}$,  
($p\ge5$), or McIntosh's: $\binom{2p-1}{p-1}\equiv1-p^2\sum_{0<t<p}\frac1{t^2}\pmod{p^5}$,  
($p\ge7$), replacing ordinary binomials by generalized binomial coefficients $\binom{*}{*}_U$,  
where $U=U(P,Q)$ is an arbitrary fundamental Lucas sequence. That is, a sequence which satisfies 
$U_0=0$, $U_1=1$ and $U_{t+2}=PU_{t+1}-QU_t$, for all $t\ge0$.
\end{abstract}

\section{Introduction}
\label{sec:1}

 In 1862 Joseph Wolstenholme \cite{Wo} established a now well-known congruence for binomial 
coefficients, namely 
 
 \begin{theorem}\label{thm:W} Let $p$ be a prime number $\ge5$. Then 
\begin{equation}\label{eq:W}
\binom{2p-1}{p-1}\equiv1\pmod {p^3}.
\end{equation}
\end{theorem}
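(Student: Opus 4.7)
My plan is to start from the product expansion
\begin{equation*}
\binom{2p-1}{p-1}=\prod_{k=1}^{p-1}\frac{p+k}{k}=\prod_{k=1}^{p-1}\Bigl(1+\frac{p}{k}\Bigr),
\end{equation*}
where the fractions are interpreted in $\Z_{(p)}$ (i.e.\ modulo $p^3$ inversion of integers coprime to $p$ is well defined). Expanding the product in elementary symmetric fashion gives
\begin{equation*}
\binom{2p-1}{p-1}=1+p\,S_1+p^{2}S_2+p^{3}S_3+\cdots ,\qquad S_r=\!\!\sum_{1\le k_1<\cdots<k_r\le p-1}\!\!\frac{1}{k_1\cdots k_r}.
\end{equation*}
So it suffices to show $S_1\equiv 0\pmod{p^2}$ and $S_2\equiv 0\pmod{p}$; all subsequent terms are already divisible by $p^{3}$.

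For $S_2$ I would use the Newton-type identity $2S_2=S_1^{2}-\sum_{k=1}^{p-1}k^{-2}$. Since $k\mapsto k^{-1}$ permutes $\{1,\dots,p-1\}$ modulo $p$, one has $\sum k^{-2}\equiv\sum k^{2}=\frac{(p-1)p(2p-1)}{6}\equiv0\pmod{p}$ (using $p\ge5$ so that $6$ is invertible), and $S_1^{2}$ is visibly $\equiv0\pmod p$ once $S_1\equiv0\pmod p$ is known; hence $S_2\equiv0\pmod p$.

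The main obstacle—and the actual content of the theorem—is the strengthened harmonic congruence $S_1=\sum_{k=1}^{p-1}1/k\equiv0\pmod{p^{2}}$. Here I would use the pairing $k\leftrightarrow p-k$:
\begin{equation*}
\frac1k+\frac1{p-k}=\frac{p}{k(p-k)},\qquad\text{so}\qquad
S_1=p\sum_{k=1}^{(p-1)/2}\frac{1}{k(p-k)}.
\end{equation*}
This already delivers one factor of $p$; to get the second, reduce modulo $p$:
\begin{equation*}
\sum_{k=1}^{(p-1)/2}\frac{1}{k(p-k)}\equiv-\sum_{k=1}^{(p-1)/2}\frac{1}{k^{2}}=-\tfrac12\sum_{k=1}^{p-1}\frac{1}{k^{2}}\equiv0\pmod{p},
\end{equation*}
by the same square-sum identity used above. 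This upgrades $S_1$ from $O(p)$ to $O(p^{2})$ and completes the argument. The hypothesis $p\ge5$ enters precisely in dividing by $6$ (so that $\sum k^{2}$ actually vanishes mod $p$), which is why the congruence fails for $p=2,3$.
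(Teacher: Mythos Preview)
Your argument is correct and is in fact the classical proof of Wolstenholme's congruence. The paper itself does not supply a self-contained proof of Theorem~\ref{thm:W}: it is quoted as the 1862 result of Wolstenholme and then recovered only as the special case $U=U(2,1)$ of the main Theorem~\ref{thm:N} (see Remark~\ref{rem:kw}).

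Structurally your proof and the paper's proof of Theorem~\ref{thm:N} are the same: expand the product $\prod_{t=1}^{\rho-1}(V_\rho U_t+U_\rho V_t)$ and reduce everything to showing that the first two ``elementary symmetric'' sums vanish to the right $p$-power. Your $S_1$ and $S_2$ are, up to harmless powers of $2$, exactly the specializations of the paper's $\Sigma_1=\sum V_t/U_t$ and $\Sigma_{1,1}=\sum_{s<t}V_sV_t/(U_sU_t)$ at $(P,Q)=(2,1)$. The difference lies in how those vanishing statements are established. You give a short self-contained argument: the pairing $k\leftrightarrow p-k$ plus the explicit formula $\sum_{k=1}^{p-1}k^2=\tfrac{(p-1)p(2p-1)}{6}$ handle both $S_1\pmod{p^2}$ and $S_2\pmod p$, and this is where the hypothesis $p\ge5$ is visibly used. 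The paper instead invokes Lemma~\ref{lem:psquare} and Lemma~\ref{lem:Sig}, whose proofs (for general Lucas sequences) are outsourced to \cite{Ba1,KW2,Pan,Ba2,Ba3}. So your write-up is more elementary and self-contained for the classical case, while the paper's route is designed to work uniformly for all $U(P,Q)$.
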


 Charles Babbage \cite{Bab}, in 1819, had actually shown that congruence (\ref{eq:W}) held modulo $p^2$ for 
all primes $p$ greater than $2$. There is a survey paper \cite{Mes} on the   
numerous generalizations of Theorem \ref{thm:W} discovered in the last 150 years. 
This survey also contains many other related results. 

 We focus first our attention on the sligthly more general congruence  
\begin{equation}\label{eq:W+}
\binom{(k+1)p-1}{p-1}\equiv1\pmod {p^3},
\end{equation} which holds for all primes $p\ge5$ and all nonnegative integers $k$. 
According to the survey \cite{Mes}, congruence (\ref{eq:W+}) was proved in 1900 by Glaisher 
(\cite{Glai1} p. 21, \cite{Glai2} p. 33). 

 Lemma 3 of the paper \cite{KW1}, which we rewrite as a theorem below, is an analogue of (\ref{eq:W+}). 

\begin{theorem} \label{thm:KW} Let $p$ be a prime at least $7$ whose rank of appearance $\rho$ in the Fibonacci 
sequence is equal to $p-\e_p$, where $\e_p$ is $\pm1$. Then for all integers $k\ge0$ 
\begin{equation}\label{eq:Fib}
\binom{(k+1)\rho-1}{\rho-1}_F\equiv\e_p^k\pmod {p^3},
\end{equation}
where the symbol $\binom{*}{*}_F$ stands for the Fibonomial coefficient.
\end{theorem}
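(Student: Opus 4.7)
I would mimic Wolstenholme's own proof of (\ref{eq:W}) --- expand $\prod_{j=1}^{p-1}(1 + p/j)$ and invoke $\sum_{j=1}^{p-1} 1/j \equiv 0 \pmod{p^2}$ and $\sum_{i<j} 1/(ij) \equiv 0 \pmod p$ --- but replace the splitting $p + j = j(1 + p/j)$ by the Fibonacci addition formula $F_{k\rho + j} = F_{k\rho - 1}\,F_j + F_{k\rho}\,F_{j+1}$. Writing the Fibonomial as a product yields
\begin{equation*}
\binom{(k+1)\rho - 1}{\rho - 1}_F = \prod_{j=1}^{\rho - 1}\!\frac{F_{k\rho + j}}{F_j} = \prod_{j=1}^{\rho - 1}\!\bigl( A + B\, c_j \bigr),
\end{equation*}
with $A := F_{k\rho - 1}$, $B := F_{k\rho}$, $c_j := F_{j+1}/F_j$. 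The law of apparition gives $p \nmid F_j$ for $1 \le j \le \rho - 1$ (so the $c_j$ lie in $\Z_{(p)}$) and $p \mid B$ (since $\rho \mid k\rho$); consequently $B^3 \equiv 0 \pmod{p^3}$, and the expansion collapses to
\begin{equation*}
\binom{(k+1)\rho - 1}{\rho - 1}_F \equiv A^{\rho - 1} + A^{\rho - 2}\, B\, \sigma_1 + A^{\rho - 3}\, B^2\, \sigma_2 \pmod{p^3},
\end{equation*}
where $\sigma_1 := \sum_{j} c_j$ and $\sigma_2 := \sum_{i<j} c_i c_j$.

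Next, I would extract $A$ and $B$ modulo $p^3$ from the decomposition $M = F_{\rho - 1}\,I + F_\rho\,P$, where $M$ is the $\rho$-th power of the Fibonacci matrix and $P = \bigl(\begin{smallmatrix}1&1\\1&0\end{smallmatrix}\bigr)$. Binomial-expanding $M^k$ and discarding all terms involving $F_\rho^{\,j}$ with $j \ge 3$ gives
\begin{align*}
A &\equiv F_{\rho - 1}^{\,k} + \tbinom{k}{2}\, F_{\rho - 1}^{\,k-2}\, F_\rho^{\,2} \pmod{p^3},\\
B &\equiv k\, F_{\rho - 1}^{\,k-1}\, F_\rho + \tbinom{k}{2}\, F_{\rho - 1}^{\,k-2}\, F_\rho^{\,2} \pmod{p^3}.
\end{align*}
Cassini's identity $F_{\rho - 1} F_{\rho + 1} - F_\rho^{\,2} = (-1)^\rho = 1$ (as $\rho$ is even for $p \ge 7$), together with $F_{\rho + 1} \equiv F_{\rho - 1} \pmod p$, forces $F_{\rho - 1}^{\,2} \equiv 1 \pmod p$; a simple sign check yields $F_{\rho - 1} \equiv \e_p \pmod p$, so $A^{\rho - 1}$ already delivers $\e_p^{\,k}$ modulo $p$.

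The decisive, and hardest, step is to promote this mod-$p$ agreement to mod $p^3$. This demands (i) a Fermat-quotient-style refinement of $F_{\rho - 1} \equiv \e_p$ to modulus $p^3$, and (ii) Fibonacci analogues of Wolstenholme's sum vanishings, sharp enough that $A^{\rho - 2} B\, \sigma_1$ and $A^{\rho - 3} B^2 \sigma_2$ together cancel the $O(p^2)$ correction of $A^{\rho - 1}$. I would attack (ii) by using Cassini's telescoping $c_j - c_{j-1} = (-1)^j/(F_{j-1} F_j)$ to re-express $\sigma_1$ as a reciprocal Fibonacci sum, and by exploiting the involution $j \mapsto \rho - j$ (under which $F_{\rho - j} \equiv (-1)^{j+1} \e_p\, F_j \pmod p$) to pair summands and obtain the mod-$p$ vanishings needed for $\sigma_1$ and $\sigma_2$. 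Once these Fibonacci-Wolstenholme identities are assembled with (i), every $O(p)$- and $O(p^2)$-contribution is erased and the right-hand side reduces to $\e_p^{\,k}$ modulo $p^3$, as claimed.
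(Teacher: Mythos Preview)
Your strategy---write the Fibonomial as a product via an addition formula, expand in powers of the ``$p$-divisible'' factor, and control the elementary symmetric functions---is exactly the skeleton of the paper's proof of the more general Theorem~\ref{thm:N}, of which Theorem~\ref{thm:KW} is a corollary (Remark~\ref{rem:kw}). The difference lies in \emph{which} addition formula you pick. The paper uses the $U$--$V$ identity $2F_{k\rho+t}=L_{k\rho}F_t+F_{k\rho}L_t$ (identity~(\ref{eq:1}) with $V=L$), so the symmetric sums that appear are $\Sigma_1=\sum_{0<t<\rho}L_t/F_t$ and $\Sigma_{1,1}=\sum_{s<t}L_sL_t/(F_sF_t)$. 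The payoff is Lemma~\ref{lem:psquare}: $\Sigma_1\equiv0\pmod{p^2}$, a known Fibonacci--Wolstenholme congruence. This kills the linear term outright and leaves only $(V_{k\rho}/2)^{\rho-1}$ plus a $\Sigma_{1,1}$-correction, which Lemma~\ref{lem:Sig} handles case by case.

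Your decomposition $F_{k\rho+j}=F_{k\rho-1}F_j+F_{k\rho}F_{j+1}$ with $c_j=F_{j+1}/F_j$ does \emph{not} enjoy this vanishing. The involution $j\mapsto\rho-j$ you plan to use gives $c_j+c_{\rho-j}\equiv(F_{j+1}-F_{j-1})/F_j=1\pmod p$, hence $\sigma_1\equiv(\rho-1)/2\not\equiv0\pmod p$. Relatedly, your statement that $A^{\rho-1}$ has only an $O(p^2)$ correction to $\epsilon_p^{\,k}$ is not right when $\epsilon_p=1$: then $\rho-1=p-2$ and $F_{\rho-1}^{k(\rho-1)}$ differs from $\epsilon_p^{\,k}$ by an $O(p)$ term governed by $(F_{p-2}-1)/p$, which is generally nonzero. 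So the ``decisive step'' you flag is not a clean pair of vanishings but a genuine three-way cancellation among the $O(p)$ piece of $A^{\rho-1}$, the nonvanishing $B\sigma_1$ term, and a Fermat-quotient identity relating $(F_{\rho-1}-\epsilon_p)/p$ to $F_\rho/p$. This can presumably be pushed through, but it is noticeably messier than the paper's route; switching to the ratios $L_t/F_t$ (equivalently $V_t/U_t$) is the simplification that makes the argument clean.
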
 

 If $A=(a_n)_{n\ge0}$ is a sequence of complex numbers where $a_0=0$ and all $a_n\not=0$ for $n>0$, 
then one defines, for $m$ and $n$ nonnegative integers, the {\it generalized} binomial coefficient 
\begin{equation}\label{eq:def}
\binom{m}{n}_A=\begin{cases}\frac{a_ma_{m-1}\dots a_{m-n+1}}{a_na_{n-1}\dots a_1},& \text{ if }m\ge n\ge1;\\
1,& \text{ if }n=0;\\
0,& \text{ otherwise.}\end{cases} 
\end{equation} The well written paper \cite{Gou} contains a number of early references about these coefficients 
and investigated several of their general properties. We point out another early reference \cite{Wa},  
not often quoted, in which Ward gives two equivalent criteria that imply the integrality of the generalized 
coefficients $\binom{m}{n}_A$ of a sequence of integers $A$. One of them is that  
$A$ be a {\it strong divisibility sequence}, i.e., one for which $a_{\gcd(m,n)}=\gcd (a_m,a_n)$ for all 
$m>n>0$; the other criterion is expressed in terms of ranks of appearance of prime powers in $A$. The equivalence 
of these two criteria was essentially rediscovered in \cite{KnWi}. 
When $A$ is the Fibonacci sequence 
these binomial coefficients are called {\it Fibonomials} and many papers have studied their properties. Some 
papers have considered the generalized binomial coefficients when $A$ is a fundamental Lucas sequence,  
that is, a sequence $U=U(P,Q)$ satisfying 
\begin{equation}\label{eq:second}
U_0=0,\; U_1=1\;\text{ and }U_{n+2}=PU_{n+1}-QU_n, \;\text{ for all }n\ge0,
\end{equation}
where $(P,Q)$ is a pair of integers, $Q$ nonzero. We will refer to these generalized 
binomials as 
{\it Lucanomial} coefficients in the sequel. Ordinary binomials are Lucanomial 
coefficients with parameters $(P,Q)=(2,1)$, whereas the Fibonomials correspond to $(P,Q)=(1,-1)$.

 Therefore it makes sense to look for a simple congruence for the general Lucanomial     
\begin{equation}
\binom{(k+1)\rho-1}{\rho-1}_U\pmod {p^3},
\end{equation}
valid for an arbitrary Lucas sequence $U$, that would encompass both the congruence (\ref{eq:W+}) and 
Theorem \ref{thm:KW}. 

 Here $\rho$ represents the rank of appearance of the prime $p$ in 
$U$, that is, the least positive integer $t$ such that $p\mid U_t$. It is known to exist 
for all primes $p$ not dividing $Q$ and to divide $p-\e_p$, where $\e_p$ is the Legendre 
character $(D\;|\;p)$ and $D$ is $P^2-4Q$. It is necessary to require, as in Theorem \ref{thm:KW}, 
that the rank $\rho$ be maximal, i.e., be equal to $p-\e_p$. Note that the rank of any 
prime $p$ is maximal and equal to $p$ for $U_n=n$ ($D=0$, $\e_p=0$). However, the case $\e_p=0$  
only occurs for $p=5$ for the Fibonacci sequence $F=U(1,-1)$, a case that Theorem \ref{thm:KW} 
does not address. 
A calculation for $p=5$ yields  
\begin{equation}\label{eq:case0}
\binom{2\rho-1}{\rho-1}_F=\binom{9}{4}_F\equiv1\pmod {125}. 
\end{equation}  This residue of $1$ is at least conform to what one gets in (\ref{eq:W+}), but does not 
match the expression $\e_p^k$ of Theorem \ref{thm:KW} which would yield $0$. 

\medskip

 Thus, one needs to generalize the results of the paper \cite{KW1} from Fibonomial coefficients to 
Lucanomial coefficients and include the case $\e_p=0$ in the analysis. However, some of the results 
leading to Theorem \ref{thm:KW} in \cite{KW1} seem, at first sight, to depend on idiosyncracies 
of the Fibonacci sequence. Thus, a few numerical calculations helped us believe in the  
existence of a generalization and were useful in guiding us to it.

\begin{theorem} \label{thm:N} Let $U=U(P,Q)$ be a fundamental Lucas sequence with parameters $P$ and 
$Q$. Let $p\ge5$, $p\nmid Q$, be a prime whose rank of appearance $\rho$ in $U$   
is equal to $p-\e_p$, where $\e_p$ is the Legendre character $(D\;|\;p)$, $D=P^2-4Q$. 
Then for all integers $k\ge0$ 
\begin{equation}\label{eq:Luc}
\binom{(k+1)\rho-1}{\rho-1}_U\equiv(-1)^{k\e_p}Q^{k\rho(\rho-1)/2}\pmod {p^3},
\end{equation}
where the symbol $\binom{*}{*}_U$ stands for the Lucanomial coefficient.
\end{theorem}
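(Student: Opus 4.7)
The plan is to use the addition formula $U_{m+n}=U_{m+1}U_n-QU_mU_{n-1}$ with $m=k\rho$ and $n=i$, to write $U_{k\rho+i}=U_{k\rho+1}U_i-QU_{k\rho}U_{i-1}$. Dividing by $U_i$ (a $p$-adic unit for $1\le i\le\rho-1$) and using $p\mid U_{k\rho}$, the product expansion modulo $p^3$ truncates at the $U_{k\rho}^2$ term, yielding
$$\binom{(k+1)\rho-1}{\rho-1}_U\equiv U_{k\rho+1}^{\rho-1}-QU_{k\rho}U_{k\rho+1}^{\rho-2}\Sigma_1+Q^2U_{k\rho}^2U_{k\rho+1}^{\rho-3}\Sigma_2\pmod{p^3},$$
with $\Sigma_1=\sum_{i=1}^{\rho-1}U_{i-1}/U_i$ and $\Sigma_2=\sum_{1\le i<j\le\rho-1}U_{i-1}U_{j-1}/(U_iU_j)$ (inverses in $\Z/p^3\Z$). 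Specialising $(P,Q)=(2,1)$ recovers the classical expansion $\prod(1+kp/i)$, a useful sanity check.

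Next I would make $U_{k\rho}$ and $U_{k\rho+1}$ explicit modulo $p^3$ via the companion Lucas pair $\hat U$, $\hat V$ with parameters $(V_\rho,Q^\rho)$: one has $U_{k\rho}=U_\rho\hat U_k$ and $2U_{k\rho+1}=\hat V_k+PU_\rho\hat U_k$. Its discriminant $V_\rho^2-4Q^\rho=DU_\rho^2$ vanishes modulo $p^2$, so the Binet expansion collapses to
$$\hat U_k\equiv k(V_\rho/2)^{k-1}+\tbinom{k}{3}(V_\rho/2)^{k-3}DU_\rho^2/4,\quad \hat V_k\equiv 2(V_\rho/2)^k+\tbinom{k}{2}(V_\rho/2)^{k-2}DU_\rho^2/2\pmod{p^3}.$$
The key consequence $(V_\rho/2)^2\equiv Q^\rho\pmod{p^2}$ is the seed of the factor $Q^{k\rho(\rho-1)/2}$ in the theorem; the sign $(-1)^{k\e_p}$ will emerge from which square root of $Q^\rho$ the element $c=\alpha^\rho\equiv V_\rho/2\pmod p$ represents (namely $c\equiv 1, Q, P/2\pmod p$ according as $\e_p=1,-1,0$).

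The third step is to establish Lucas analogs of Wolstenholme's harmonic sum identities: $\Sigma_1$ modulo $p^2$ and $\Sigma_2$ modulo $p$. The reflection $\alpha^\rho\equiv\beta^\rho\pmod p$ gives $U_{\rho-i}\equiv-Q^{-i}cU_i\pmod p$, so pairing $i\leftrightarrow\rho-i$ together with $PU_i=U_{i+1}+QU_{i-1}$ produces a closed form for $\Sigma_1\pmod p$; lifting to $\pmod{p^2}$ (the analog of $\sum_{i<p}1/i\equiv 0\pmod{p^2}$) uses the higher-order companion-Lucas expansion. Then $\Sigma_2\pmod p$ follows from $2\Sigma_2=\Sigma_1^2-\sum(U_{i-1}/U_i)^2$, the sum of squares being handled symmetrically. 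These congruences must specialise to the classical Wolstenholme sums at $(P,Q)=(2,1)$ and to the identities in~\cite{KW1} at $(P,Q)=(1,-1)$.

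Substituting back, Fermat's little theorem on $Q$ and the identity $V_\rho^2-DU_\rho^2=4Q^\rho$ assemble the three contributions into $(-1)^{k\e_p}Q^{k\rho(\rho-1)/2}\pmod{p^3}$, with no single piece reproducing the target already modulo $p^2$. The case $\e_p=0$ needs separate treatment: here $\rho=p$ is odd, $D\equiv 0\pmod p$, $v_p(DU_\rho^2)\ge 3$, and several terms in the expansions degenerate; one must verify directly (as suggested by~(\ref{eq:case0})) that the formula persists with $(-1)^{k\cdot 0}=1$, reducing ultimately to $Q^{kp(p-1)/2}\equiv 1\pmod{p^3}$ up to the $\Sigma_1,\Sigma_2$ corrections. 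The main obstacle is the sharp evaluation of $\Sigma_1$ modulo $p^2$: this is the genuine Lucas--Wolstenholme congruence that unifies $\sum_{i<p}1/i\equiv 0\pmod{p^2}$ with its Fibonacci avatar in~\cite{KW1}, and proving it uniformly across all three cases of $\e_p$ is where the real work lies.
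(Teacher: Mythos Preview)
Your plan is viable and would lead to a proof, but it takes a genuinely different route from the paper's. The paper expands via the symmetric addition formula $2U_{k\rho+t}=V_{k\rho}U_t+U_{k\rho}V_t$, so the relevant harmonic sums are $\Sigma_1=\sum_t V_t/U_t$ and $\Sigma_{1,1}=\sum_{s<t}V_sV_t/(U_sU_t)$. The decisive gain is that \emph{this} $\Sigma_1$ vanishes modulo $p^2$ (imported from earlier work as their Lemma~\ref{lem:psquare}), killing the linear term outright. Your $\Sigma_1=\sum_i U_{i-1}/U_i$ does not vanish: since $V_t=PU_t-2QU_{t-1}$ gives $V_t/U_t=P-2QU_{t-1}/U_t$, your sum is $(\rho-1)P/(2Q)\pmod{p^2}$, and you must carry this nonzero constant through the final assembly together with the nonvanishing $U_{k\rho+1}^{\rho-1}$. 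So the ``main obstacle'' you flag is not an open problem but a one-line linear consequence of the known $V_t/U_t$ congruence.

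On the other hand, your treatment of the leading factor via the companion pair $(\hat U,\hat V)$ with parameters $(V_\rho,Q^\rho)$ and its degenerate-discriminant Binet expansion is arguably cleaner than the paper's handling of the same piece: the paper computes $(V_{k\rho}/2)^{\rho-1}$ by a case split on $\e_p\in\{1,0,-1\}$ and on the $2$-adic valuation of $k$ (Lemmas~\ref{lem:Vro} and~\ref{lem:V2}, plus a separate argument when $\e_p=1$ to absorb the $\Sigma_{1,1}\equiv D$ term), whereas your expansion is uniform in $\e_p$ and $k$. Each route thus buys simplicity in one place at the cost of the other: the paper's choice of sums makes the harmonic part trivial but forces casework on the power of $V_{k\rho}$; your choice makes the power uniform but leaves three nonzero contributions to combine at the end.
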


\begin{remark}\label{rem:pow} Theorem \ref{thm:N} implies that for all $k\ge0$ 
$$
\binom{(k+1)\rho-1}{\rho-1}_U\equiv\binom{2\rho-1}{\rho-1}_U^k\pmod{p^3}.
$$
\end{remark}

\begin{remark}\label{rem:kw} Congruence (\ref{eq:W+}), Theorem \ref{thm:KW} and, as readily checked, 
congruence (\ref{eq:case0}) are implied by 
Theorem \ref{thm:N}. Indeed, the sequence $a_n=n$ is $U_n(2,1)$, for which $Q=1$ and $\e_p=0$ 
for all primes.  
To see that Theorem \ref{thm:KW} is a corollary of Theorem \ref{thm:N}, it suffices 
to check that 
$$
\e_p=-(-1)^{\rho(\rho-1)/2},
$$ for every odd prime $p>5$ of maximal rank in 
the Fibonacci sequence $U(1,-1)$. All primes of rank $p\pm1$ in the Fibonacci sequence 
must be congruent to $3\pmod 4$, since by Euler's criterion for Lucas sequences (\ref{eq:criterion}) 
we need 
to have $(-1\;|\;p)=-1$. If $\e_p=1$, that is, if $\rho=p-1$, then $\rho(\rho-1)\equiv2\pmod4$ 
so that $-(-1)^{\rho(\rho-1)/2}=+1=\e_p$. If $\e_p=-1$, that is, $\rho=p+1$, 
then $\rho(\rho-1)\equiv0\pmod4$ so $-(-1)^{\rho(\rho-1)/2}=-1=\e_p$. 
\end{remark} 

 Section \ref{sec:2} of the paper is devoted to some relevant additional remarks  
on Lucas sequences, some useful lemmas and to a proof of Theorem \ref{thm:N}. 

\medskip

 For all primes $p\ge5$ and all nonnegative integers $k$ and $\ell$, we have the congruence 
\begin{equation}\label{eq:Viggo}
\binom{kp}{\ell p}\equiv\binom{k}{\ell}\pmod{p^3}.
\end{equation} This congruence supersedes congruence (\ref{eq:W+}) and was first proved in a 
collective paper \cite{Bru} which 
appeared in 1952. It was reproved by Bailey some 30 years later in the paper \cite{Bai}, where 
the case $(k,\ell)=(2,1)$, which is equivalent to Wolstenholme's congruence (\ref{eq:W}), 
is proved first before an induction on $k$ yielded congruence (\ref{eq:W+}) and 
another proof by induction gave (\ref{eq:Viggo}). Interestingly another simple 
argument, combinatorial, reduces the proof of (\ref{eq:Viggo}) to that of the 
case $(k,\ell)=(2,1)$ in the book \cite{Stan} (see solution of exercice 1.14 p. 165).  

 Similarly in \cite{KW1}, Theorem \ref{thm:KW} is used by the authors to 
produce an analogue of (\ref{eq:Viggo}) for the Fibonacci sequence $U=F$. 
That is, in our notation, for primes $p\ge7$ of rank $\rho=p-\epsilon_p$, 
where $\epsilon_p=\pm1$, their result (\cite{KW1}, p. 296) states that
\begin{equation}\label{eq:KW2}
\binom{kp}{\ell p}_F\equiv\epsilon_p^{(k-\ell)\ell}\binom{k}{\ell}_{F'}\pmod{p^3},
\end{equation} where $F'_t=F_{\rho t}$ for all $t\ge0$, $k$, $\ell$ are integers satisfying $k\ge\ell\ge1$. 
Section \ref{sec:3} states and proves a congruence, Theorem \ref{thm:LjWe}, for Lucanomials 
$\binom{k\rho}{\ell\rho}_U$ 
$\pmod{p^3}$ that subsumes the congruences (\ref{eq:Viggo}) and (\ref{eq:KW2}).  
Here again the proof of this more general result is easily derived from Theorem \ref{thm:N}. 
We raise in passing the question of the existence of a combinatorial argument that 
would reduce Theorem \ref{thm:LjWe} to the case $(k,\ell)=(2,1)$. Note that 
Lucanomial coefficients were given a combinatorial interpretation in \cite{Ben}. Also a $q$-analogue 
of (\ref{eq:Viggo}) that uses $q$-binomial coefficients was established in 
the paper \cite{Stra}.    

\medskip

 In a fourth section, we selected three congruences for binomials $\binom{2p-1}{p-1}\pmod{p^5}$, namely 
(\ref{eq:i}), (\ref{eq:ii}) and  (\ref{eq:iii}),    
and establish for each a generalization to Lucanomial coefficients $\binom{2\rho-1}{\rho-1}_U\pmod{p^5}$ for 
primes $p\ge7$ of maximal rank $\rho$ in $U$. Not to lengthen an already long introduction we only 
state the example of congruence (\ref{eq:iii}), i.e.,  
$$
\binom{2p-1}{p-1}\equiv1-p^2\sum_{0<t<p}\frac1{t^2}\pmod{p^5},
$$ which generalizes into 
$$
\binom{2\rho-1}{\rho-1}_U\equiv(-1)^{\e_p}Q^{\frac{\rho(\rho-1)}2}\bigg[1-4\frac{U_\rho^2}{V_\rho^2}
\sum_{0<t<\rho}\frac{Q^t}{U_t^2}\bigg]\pmod{p^5},
$$ where $U(P,Q)$ is a fundamental Lucas sequence and $V(P,Q)$ is its companion sequence.   

Note that the condition that $p$ be of maximal rank in $U$ may be viewed as a quadratic 
analogue of Artin's conjecture which gives a positive density (equal to a positive 
rational number times Artin's constant) for the set of primes $p$ for which a given 
$a$ is a primitive root $\pmod p$, when $a$ is a non-square integer and $|a|\ge2$.  
Hooley \cite{Hoo} proved Artin's conjecture conditionally to some generalized Riemann hypotheses. 
So did Roskam (\cite{Ros}, \cite{Ros1}) for the set of primes $p$ for which a fundamental unit 
of a quadratic field has maximal order modulo $(p)$. Thus, given $U(P,Q)$, $Q$ not a square, 
our theorems presumably should also concern sets of primes of positive densities.       

\medskip

 In recent years congruences for ordinary binomials $\binom{2p-1}{p-1}\pmod{p^l}$ 
have been established for larger and larger values of $l$ (see \cite{Mes}, p. 4-6). 
No doubt there must be higher corresponding congruences for Lucanomials. In fact, we end the 
paper with such a congruence modulo ${p^6}$. 
Generalizations of (\ref{eq:i}) are stated in Theorems \ref{thm:pcinq1} and \ref{thm:pcinq4}, 
those of (\ref{eq:ii}) and the above congruence (\ref{eq:iii}) appear in Theorems  
\ref{thm:pcinq2} and \ref{thm:pcinq3} respectively. We added an appendix as a short fifth 
section where the integrality of all Lucanomial coefficients $\binom{m}{n}_U$ is asserted 
for all $U$ Lucas sequences. 

\medskip

 Familiarity with Lucas sequences is assumed throughout the paper, but the reader may want 
to consult the introduction of \cite{Ba3} and the references it mentions. Chapter 4 of the 
book \cite{Wi} is a useful introduction to these sequences. 

\medskip

 Lucanomial coefficients have already been the object of generalizations of 
classical arithmetic properties of ordinary binomial coefficients. 
Kummer's theorem giving the exact power of a prime $p$ 
in the binomial coefficient $\binom{m+n}{n}$ as the number of carries in the addition of $m$ 
and $n$ in radix $p$ was generalized to all strong divisibility 
sequences of positive integers \cite{KnWi}. That includes, in particular, all Lucas sequences $U(P,Q)$ 
with positive terms when $P$ and $Q$ are coprime. 

 Also a generalization of the celebrated theorem of Lucas:  
$$
\binom{mp+r}{np+s}\equiv\binom{m}{n}\binom{r}{s}\pmod p,
$$ where $r$ and $s$ are nonnegative integers less than the prime $p$, 
was achieved in terms of Lucanomials $\binom{mp+r}{np+s}_U$, under the 
hypothesis that $U(P,Q)$ is a Lucas sequence with $\gcd(P,Q)=1$, $P\not=0$ and 
$P^2Q\not=1$ (see \cite{HuSu}). 

 In fact both the theorems of Kummer and of Lucas had been generalized in an earlier paper 
\cite{Fr} but with respect to $q$-binomial coefficients.      
 
\section{Preliminaries and a proof of Theorem \ref{thm:N}}
\label{sec:2}

 Lucas theory is often developped with the two hypotheses that  $U(P,Q)$ is  
nondegenerate and $\gcd(P,Q)$ is $1$. 
The Lucas sequence $U(P,Q)$ is called {\it degenerate} whenever the ratio of the 
zeros $\alpha$ and $\beta$ of $x^2-Px+Q$ is a root of unity. We do not make any of these assumptions 
here. If $U$ is degenerate then we must have $U_2U_3U_4U_6=0$. Indeed, if $\alpha\not=\beta$ 
then $U_t=\frac{\alpha^t-\beta^t}{\alpha-\beta}$ and the ratio $\alpha/\beta$, lying in the 
quadratic field $\mathbb Q(\sqrt{D})$, must be a second, third, fourth or sixth root of unity. 
Thus, some terms of the sequence $U$ will be $0$, but rather than discard those Lucas sequences 
from our analysis, we make a small amendment 
to the definition (\ref{eq:def}) to ensure that the corresponding 
Lucanomials $\binom{m}{n}_U$ are well defined as rational numbers. Although 
the hypotheses of Theorems \ref{thm:N}, \ref{thm:LjWe} or of the theorems of Section \ref{sec:4} 
if applied to a prime $p\ge11$ prevent the corresponding Lucanomials from having zero terms, 
this is not necessarily the case if $p=5$ or $p=7$.  
With $\gcd(P,Q)>1$, the 
Lucas sequence $A=U(P,Q)$ is no longer a strong divisibility sequence. Nevertheless $A$, or $\lambda A$, 
$\lambda$ an integer, satisfies some `convexity' property. Namely for all prime powers 
$p^a$ ($a\ge1$), $p\nmid2Q$, and for all $x\ge2$, we have  
\begin{equation}\label{eq:convex}
\#\;\{t\in[x],\;p^a\mid A_t\}\;\ge\;\#\;\{t\in[y],\;p^a\mid A_t\}+\#\;\{t\in[x-y],\;p^a\mid A_t\},
\end{equation} for all $y\in[x-1]$. Here, if $z$ is an integer $\ge1$, $[z]$ denotes the 
set of natural numbers $1,2,\hdots,z$. This property holds because for such prime powers $p^a$, 
we have $p^a\mid U_t$ iff $\rho(p^a)\mid t$, where $\rho(p^a)$ is the rank of appearance 
of $p^a$ in $U$, and because $\lfloor x+y\rfloor\ge\lfloor x\rfloor+\lfloor y\rfloor$ for 
all real numbers $x$ and $y$. 

\medskip 

 The convention we adopt for the generalized binomials $\binom{m}{n}_A$ of definition (\ref{eq:def}) 
is that if there are zero terms in the product $\prod_{i=1}^n\frac{a_{m+1-i}}{a_i}$ then 
\begin{equation}\label{eq:conv}
\text{a }0\text{ in the numerator and a }0\text{ in the denominator 
{\bf cancel out} as a }1.
\end{equation} 

 With convention (\ref{eq:conv}), property (\ref{eq:convex}) satisfied by $A=\lambda U$, for all Lucas sequences $U$, 
guarantees that the generalized binomial $\binom{m}{n}_A$ is a well defined rational number. 
Indeed this property implies that the number of $0$ terms in the numerator of  
$\prod_{i=1}^n\frac{a_{m+1-i}}{a_i}$ is at least that of its denominator. It also implies that $\binom{m}{n}_A$, $m$ 
and $n$ nonnegative integers, is 
well defined $p$-adically for all primes $p\nmid2Q$. In fact we can show it is always a rational integer.
\footnote{See our short Appendix}      

\medskip

 To each fundamental Lucas sequence $U(P,Q)$ we associate a {\it companion} Lucas sequence $V=V(P,Q)$ 
which obeys recursion (\ref{eq:second}), but has initial values $V_0=2$ and $V_1=P$. 
The following identities are all classical ones and are all valid no matter what the value  
of $\gcd(P,Q)$ is. We will use them throughout the paper.   

\begin{eqnarray}
2U_{s+t} & = & U_sV_t+U_tV_s,\label{eq:1}\\
2V_{s+t} & = & V_sV_t+DU_sU_t,\label{eq:2}\\
V_t^2-DU_t^2 & = & 4Q^t,\label{eq:3}\\
U_{2t} & = & U_tV_t,\label{eq:4}\\
V_{2t} & = & V_t^2-2Q^t,\label{eq:5}\\
2Q^tU_{s-t} & = & U_sV_t-U_tV_s.\label{eq:6}
\end{eqnarray}

 We referred to Euler's criterion for Lucas sequences in our introduction. The criterion  
states that 
\begin{equation}\label{eq:criterion}p\mid U_{(p-\epsilon_p)/2} \;\text{ iff }\;
Q \text{ is a square modulo }p,
\end{equation} where $U(P,Q)$ is a fundamental Lucas sequence and $p$ is a prime that 
does not divide $2DQ$ (see \cite{Wi}, pp. 84--85). 

 Note that our theorems and the lemmas of Section \ref{sec:4} all deal with primes $p\ge5$ 
of maximal rank. In their statements, we sometimes  
omit to mention the condition $p\nmid Q$, because that condition is necessary. 
Indeed, if $p\mid Q$, then, by (\ref{eq:second}), $U_t\equiv P^{t-1}\pmod p$. Thus, $p$ has 
no rank, because if $p$ divided $P$, then $\rho(p)$ would be equal to $2$, as $U_2=P$, a 
contradiction.   

\medskip

 Given a prime $p$ of rank $\rho$ and a nonnegative integer $\nu$, we write 
\begin{equation}\label{eq:nota}
\Sigma_\nu:=\sum_{0<t<\rho}\frac{V_t^\nu}{U_t^\nu} 
\text{ and }\Sigma_{1,1}:=\sum_{0<s<t<\rho}\frac{V_sV_t}{U_sU_t}.
\end{equation}

 The proof of Theorem \ref{thm:N} we are about to write uses a few lemmas which we  
state first. 

\begin{lemma} \label{lem:psquare} Let $(U,V)$ be a pair of Lucas 
sequences with parameters $P$ and $Q$. Let $\nu$ be a nonnegative integer. 
If $p\nmid Q$ is a prime at least $\nu+3$ of maximal rank 
$\rho$, i.e., of rank $p-\e_p$, where $\e_p=0$ or $\pm1$, then 
\footnote{less $\nu=0$ and $\epsilon_p=0$ when $\Sigma_0\equiv-1\pmod p$} 
\begin{equation}\label{eq:lot}
\Sigma_\nu\equiv\begin{cases}0\pmod {p^2}, &\text{ if }\nu\text{ is odd };\\
0\pmod p, &\text{ if }\epsilon_p=-1\text{ or }0;\\
-2D^{\nu/2}\pmod p, & \text{if }\nu\text{ is even and }\epsilon_p=1.\end{cases}
\end{equation} Moreover, if $p$ is an odd prime not dividing $Q$ of rank $\rho$, then 
\begin{equation}\label{eq:+}
\Sigma_\nu\equiv0\pmod p, \quad\text{ when }\nu \text{ is odd.}
\end{equation}  
\end{lemma}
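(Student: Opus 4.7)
The plan is to exploit the involution $t\mapsto\rho-t$ on the index set of $\Sigma_\nu$, refining the resulting symmetry to first order for (\ref{eq:+}) and the even case of (\ref{eq:lot}), and to second order for the odd case of (\ref{eq:lot}). From (\ref{eq:6}) and its companion $2Q^tV_{\rho-t}=V_\rho V_t-DU_\rho U_t$ (both derivable from (\ref{eq:1})--(\ref{eq:2})), together with $V_\rho^2=4Q^\rho+DU_\rho^2\equiv4Q^\rho\pmod p$ from (\ref{eq:3}) (so $V_\rho$ is a $p$-adic unit), one obtains the basic symmetry
\[V_{\rho-t}/U_{\rho-t}\equiv -V_t/U_t\pmod p.\]
For odd $\nu$ the contributions of $t$ and $\rho-t$ to $\Sigma_\nu$ cancel mod $p$; if $\rho$ is even, the self-paired term at $t=\rho/2$ satisfies $V_{\rho/2}/U_{\rho/2}=U_\rho/U_{\rho/2}^2\equiv0\pmod p$ by (\ref{eq:4}). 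This yields (\ref{eq:+}).

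For even $\nu$ in (\ref{eq:lot}), I would pass to a roots-of-unity viewpoint. Setting $\omega=\alpha-\beta$ (so $\omega^2=D$) and $r=\alpha/\beta$, one has $V_t/U_t=\omega(r^t+1)/(r^t-1)$, and the maximal-rank hypothesis forces $r$ to reduce to an element of order exactly $\rho$ in $\mathbb F_{p^2}^\times$; hence
\[
 \Sigma_\nu\equiv D^{\nu/2}\!\!\sum_{\zeta^\rho=1,\,\zeta\ne1}\!\left(\frac{\zeta+1}{\zeta-1}\right)^{\!\nu}\!\!\pmod p.
\]
The $\rho-1$ values $(\zeta+1)/(\zeta-1)$ are the roots of the monic polynomial $P(y)=\bigl((y+1)^\rho-(y-1)^\rho\bigr)/(2\rho)$, whose coefficients are $p$-integral in all cases (the case $\rho=p$ being covered by the $p$-integrality of $\binom pk/p$ for $1\le k\le p-1$). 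Newton's identities express the $\nu$-th power-sum of the roots of $P$ as a polynomial in the coefficients of $P$, and reducing with $\rho\equiv-\epsilon_p\pmod p$ gives the stated values in each sub-case; for $\epsilon_p=0$ the factor $D^{\nu/2}$ already vanishes mod $p$ for $\nu\ge2$.

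For the mod-$p^2$ statement on odd $\nu$, I would refine the pairing to second order. Expanding the exact identity $V_{\rho-t}/U_{\rho-t}=(V_\rho V_t-DU_\rho U_t)/(U_\rho V_t-U_t V_\rho)$ as a geometric series in $U_\rho/V_\rho=O(p)$ gives
\[
 \frac{V_{\rho-t}}{U_{\rho-t}}\equiv -\frac{V_t}{U_t}+\frac{U_\rho}{V_\rho}\Big(D-\frac{V_t^2}{U_t^2}\Big)\pmod{p^2},
\]
and a binomial expansion for odd $\nu$ yields
\[
 \frac{V_t^\nu}{U_t^\nu}+\frac{V_{\rho-t}^\nu}{U_{\rho-t}^\nu}\equiv \nu\frac{U_\rho}{V_\rho}\!\left(D\frac{V_t^{\nu-1}}{U_t^{\nu-1}}-\frac{V_t^{\nu+1}}{U_t^{\nu+1}}\right)\pmod{p^2}.
\]
Summing over $t$, one gets $2\Sigma_\nu\equiv\nu(U_\rho/V_\rho)(D\Sigma_{\nu-1}-\Sigma_{\nu+1})\pmod{p^2}$. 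The even-$\nu$ values just established give $D\Sigma_{\nu-1}-\Sigma_{\nu+1}\equiv0\pmod p$ in every sub-case (both terms equal $-2D^{(\nu+1)/2}$ when $\epsilon_p=1$; both vanish mod $p$ otherwise), and the extra factor of $p$ from $U_\rho$ upgrades this to $\Sigma_\nu\equiv0\pmod{p^2}$.

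The main obstacle I expect is verifying the pairing identity at the self-pair $t=\rho/2$ when $\rho$ is even: there $V_t/U_t=U_\rho/U_{\rho/2}^2$ is itself $O(p)$, so the binomial expansion does not apply automatically and must be cross-checked. Using $V_{\rho/2}=U_\rho/U_{\rho/2}$ together with $V_\rho=V_{\rho/2}^2-2Q^{\rho/2}$ from (\ref{eq:5}), a direct calculation shows the discrepancy between the two sides of the pairing formula at $t=\rho/2$ reduces to an explicit multiple of $V_{\rho/2}^3=O(p^3)$, hence is negligible modulo $p^2$.
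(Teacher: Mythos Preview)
Your argument is correct and considerably more self-contained than the paper's, which largely defers to \cite{Ba1}. The paper's sketch for even $\nu$ identifies the multiset $\{V_t/U_t \bmod p : 0<t<\rho\}$ directly---using that these values are pairwise distinct by (\ref{eq:6}) and avoid $\pm\sqrt{D}$ by (\ref{eq:3})---and then invokes the classical $\sum_{t=1}^{p-1} t^e\equiv 0\pmod p$ for $p-1\nmid e$. Your route through $r=\alpha/\beta$ and the auxiliary polynomial $P(y)=\bigl((y+1)^\rho-(y-1)^\rho\bigr)/(2\rho)$ reaches the same power sums via Newton's identities; this is a genuine repackaging, and it has the advantage of making the $\epsilon_p=0$ case uniform rather than ad hoc. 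For odd $\nu$ modulo $p^2$ the paper simply cites Theorem~3 of \cite{Ba1}, whereas your second-order expansion of the involution $t\mapsto\rho-t$ gives an explicit reduction $2\Sigma_\nu\equiv\nu(U_\rho/V_\rho)(D\Sigma_{\nu-1}-\Sigma_{\nu+1})\pmod{p^2}$; this is essentially the mechanism behind Lemma~\ref{lem:c} and Lemma~\ref{lem:s1} later in the paper, so you are anticipating that machinery.

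Two small points deserve care. First, your claim that $P(y)$ has $p$-integral coefficients ``in all cases'' is not literally true when $\rho=p$: the constant term is $\binom{p}{p}/p=1/p$. This does not damage your argument, however, because Newton's identities for the $\nu$-th power sum with $\nu\le p-3$ involve only $e_1,\dots,e_{\nu}$, and those \emph{are} $p$-integral; you should say so explicitly. Second, when you feed $\Sigma_{\nu+1}$ back into the odd-$\nu$ reduction, note that the lemma as stated would require $p\ge(\nu+1)+3$, which is one more than you have; but your own even-case computation actually gives $\Sigma_{\nu+1}$ for all even $\nu+1\le p-2$, which suffices under $p\ge\nu+3$. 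Making that range explicit closes the apparent gap.
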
 
\begin{proof} The case $\nu$ odd of (\ref{eq:lot}) is Theorem 3 of \cite{Ba1}. 
(The case $\nu=1$ first appeared, nearly complete, as the main theorem of the paper 
\cite{KW2}, but also (nearly) as a corollary of the main theorem of \cite{Pan}, and as a particular 
case of Theorem 4.1 of \cite{Ba2}, or of Theorems 3 and 12 of \cite{Ba3}.)

 The case $\nu$ even can be treated with the very same arguments used in the last part of the proof 
of Theorem 4, p. 5, of \cite{Ba1}. (The basic facts, noted first in \cite{KW2}, are that, by (\ref{eq:6}), 
all $V_t/U_t$ are distinct $\pmod p$ for $t\in(0,\rho)$ and no $V_t/U_t$ is $\pm\sqrt{D}\pmod p$ 
by (\ref{eq:3}); also $p\mid\sum_{t=1}^pt^e$ if $p-1\nmid e$). The condition 
$p\ge\nu+3$ is a sufficient condition which guarantees that $p-1\nmid\nu$ for $\nu\ge2$ even. 

 The additional congruence (\ref{eq:+}) for $\nu$ odd, but without the restrictions that $\rho$ 
be maximal and $p\ge\nu+3$, 
is a consequence of the congruence $\pmod{p^2}$ on the sixth line of the proof of Theorem 4 of \cite{Ba1}.   
\end{proof}    

\begin{lemma} \label{lem:Sig} Let $U=U(P,Q)$ be a fundamental Lucas sequence. If $p\nmid 6Q$ is a prime    
of maximal rank $\rho$ in $U$, then 
$$
\Sigma_{1,1}\equiv\begin{cases} 0\pmod p,& \text{ if }\e_p=0
\text{ or }-1;\\
D\pmod p,& \text{ if }\e_p=1.\end{cases}
$$
\end{lemma}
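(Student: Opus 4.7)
The plan is to reduce $\Sigma_{1,1}$ to the quantities $\Sigma_1$ and $\Sigma_2$, which are already controlled by Lemma \ref{lem:psquare}. The natural tool is the elementary symmetric identity obtained by expanding the square $\bigl(\sum_{0<t<\rho} V_t/U_t\bigr)^2$ and separating the diagonal from the off-diagonal contributions:
$$
\Sigma_1^{\,2} \;=\; \Sigma_2 \,+\, 2\,\Sigma_{1,1}.
$$
This rewrites the claim as a statement about $\Sigma_1^{\,2}-\Sigma_2$, both terms of which are pinned down in the previous lemma.

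Next I would verify that the hypotheses of Lemma \ref{lem:psquare} are in force for $\nu=1$ and $\nu=2$. Since $p\nmid 6Q$ we have $p\ge 5$, so the condition $p\ge\nu+3$ is met in both cases, and $p$ has maximal rank by assumption. The odd case $\nu=1$ delivers $\Sigma_1\equiv 0\pmod{p^2}$, in particular $\Sigma_1^{\,2}\equiv 0\pmod{p}$ with ample room to spare. The even case $\nu=2$ yields
$$
\Sigma_2 \;\equiv\; \begin{cases} 0 \pmod{p}, & \text{if } \e_p=0 \text{ or } -1,\\[2pt] -2D \pmod{p}, & \text{if } \e_p=+1.\end{cases}
$$

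Substituting these into $2\Sigma_{1,1}\equiv \Sigma_1^{\,2}-\Sigma_2\equiv -\Sigma_2\pmod p$ and dividing by $2$ (permitted because $p\nmid 6$ gives $p\ne 2$) yields exactly the stated dichotomy: $\Sigma_{1,1}\equiv 0\pmod p$ when $\e_p\in\{0,-1\}$, and $\Sigma_{1,1}\equiv D\pmod p$ when $\e_p=+1$.

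There is no substantive obstacle in this argument; the proof is essentially a bookkeeping consequence of Lemma \ref{lem:psquare} together with Newton's identity linking power sums and elementary symmetric functions in degree two. The points that must be kept straight are (i) that the hypothesis $p\ge\nu+3$ is satisfied for both values of $\nu$ thanks to $p\ge 5$, (ii) that $p\ne 2$ is needed to invert $2$ at the last step (which is why $p\nmid 6Q$ rather than merely $p\nmid 2Q$ appears in the statement), and (iii) that the sharper congruence $\Sigma_1\equiv 0\pmod{p^2}$ is really overkill here, since only $\Sigma_1\equiv 0\pmod p$ is needed; this slack is nonetheless natural and confirms that the lemma is as tight as one could want modulo $p$.
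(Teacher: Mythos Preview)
Your proof is correct and follows exactly the paper's own argument: the identity $\Sigma_1^2=\Sigma_2+2\Sigma_{1,1}$ combined with Lemma~\ref{lem:psquare} for $\nu=1,2$. One small slip in your commentary: the passage from $p\nmid 2Q$ to $p\nmid 6Q$ is not needed to invert $2$ (already guaranteed by $p\nmid 2Q$) but to rule out $p=3$, so that $p\ge 5=\nu+3$ for $\nu=2$ as you correctly note in point~(i).
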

\begin{proof} We have $\Sigma_1^2=\Sigma_2+2\Sigma_{1,1}$ so that $\Sigma_{1,1}\equiv-\frac12\Sigma_2
\pmod p$, since, by Lemma \ref{lem:psquare}, $p^4$ divides $\Sigma_1^2$ and $\Sigma_2$ is either $0$ 
or $-2D\pmod p$. 
\end{proof}

\begin{lemma} \label{lem:Vro} Let $U=U(P,Q)$ be a fundamental Lucas sequence. If $p\nmid Q$ is an odd prime of even 
rank $\rho$ in $U$ and $k\ge1$ is an odd integer, then 
$$
\frac{V_{k\rho}}{2}\equiv-Q^{k\rho/2}\pmod {p^2}.
$$
\end{lemma}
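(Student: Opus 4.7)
The plan is to induct on the odd integer $k$, using identity (\ref{eq:2}) for the step and identities (\ref{eq:4}), (\ref{eq:5}) for the base case.

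For the base case $k=1$, I would write the even rank as $\rho=2m$. Identity (\ref{eq:4}) gives $U_\rho = U_m V_m$; since $p\mid U_\rho$ and $m<\rho$ forces $p\nmid U_m$, we must have $p\mid V_m$, hence $V_m^2\equiv0\pmod{p^2}$. Then identity (\ref{eq:5}) yields
\[
V_\rho \;=\; V_{2m} \;=\; V_m^2 - 2Q^m \;\equiv\; -2Q^{\rho/2} \pmod{p^2},
\]
and since $p$ is odd, division by $2$ gives the desired congruence for $k=1$. The same computation, but with the stronger information $V_m^2\equiv 0\pmod{p^2}$ fed back in, shows $V_{2\rho}=V_\rho^2-2Q^\rho\equiv 2Q^\rho\pmod{p^2}$, which I will need in the step.

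For the inductive step, assuming the result holds for some odd $k\geq1$, I would apply identity (\ref{eq:2}) with $s=k\rho$ and $t=2\rho$:
\[
2V_{(k+2)\rho} \;=\; V_{k\rho}\,V_{2\rho} \;+\; D\,U_{k\rho}\,U_{2\rho}.
\]
Since $U_\rho\mid U_{k\rho}$ and $U_\rho\mid U_{2\rho}$ (the standard divisibility $U_a\mid U_b$ when $a\mid b$), the product $U_{k\rho}U_{2\rho}$ is divisible by $U_\rho^2$, hence by $p^2$, so the second term vanishes modulo $p^2$. The first term, using the induction hypothesis $V_{k\rho}\equiv -2Q^{k\rho/2}\pmod{p^2}$ and $V_{2\rho}\equiv 2Q^\rho\pmod{p^2}$, becomes
\[
V_{k\rho}\,V_{2\rho} \;\equiv\; (-2Q^{k\rho/2})(2Q^\rho) \;=\; -4\,Q^{(k+2)\rho/2} \pmod{p^2},
\]
which upon division by $4$ (recall $p$ odd) finishes the induction.

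There is really no serious obstacle here; the only thing to be careful about is verifying that both Lucas factors $U_{k\rho}$ and $U_{2\rho}$ are separately divisible by $U_\rho$, so that their product picks up the two factors of $p$ needed to kill the $DU_sU_t$ term modulo $p^2$. Once that observation is in place, identity (\ref{eq:2}) reduces the inductive step to a one-line computation using the base case.
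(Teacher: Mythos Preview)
Your proof is correct, but it takes a longer route than the paper's. The paper dispenses with induction entirely: for \emph{any} odd $k$, write $k\rho=2\cdot(k\rho/2)$ and note that $k\rho/2$ is not a multiple of $\rho$ (since $k$ is odd), so $p\nmid U_{k\rho/2}$; then identity (\ref{eq:4}) forces $p\mid V_{k\rho/2}$, and identity (\ref{eq:5}) with $t=k\rho/2$ gives $V_{k\rho}=V_{k\rho/2}^2-2Q^{k\rho/2}\equiv-2Q^{k\rho/2}\pmod{p^2}$ in one line. In other words, your base-case argument already works uniformly for every odd $k$ once you replace $m=\rho/2$ by $m=k\rho/2$; the inductive step via identity (\ref{eq:2}) is unnecessary machinery. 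Your approach does have the minor virtue of computing $V_{2\rho}\pmod{p^2}$ along the way (which is essentially Lemma~\ref{lem:V2}), but the direct argument is shorter and more transparent. One small expository slip: in your sentence about $V_{2\rho}$, the phrase ``stronger information $V_m^2\equiv0\pmod{p^2}$ fed back in'' is garbled --- what you actually use is $V_\rho^2\equiv4Q^\rho\pmod{p^2}$, which follows from the congruence for $V_\rho$ you just proved.
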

\begin{proof} Since $p$ divides $U_{k\rho}$, but not $U_{k\rho/2}$, we find by 
(\ref{eq:4}) that $p$ divides $V_{k\rho/2}$. Therefore, from 
(\ref{eq:5}) with $t=k\rho/2$, we deduce that $V_{k\rho}\equiv-2Q^{k\rho/2}\pmod {p^2}$. 
\end{proof} 

\begin{lemma} \label{lem:V2} Let $V=V(P,Q)$ be a companion Lucas sequence. Let $m$ be an integer $\ge2$. 
Suppose $V_t\equiv\pm2Q^{t/2}\pmod m$. Then 
$$
V_{2t}\equiv2Q^t\pmod m.
$$
\end{lemma}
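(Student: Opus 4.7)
The plan is to apply directly the duplication identity (\ref{eq:5}), $V_{2t}=V_t^2-2Q^t$. First I would square the hypothesis $V_t\equiv\pm 2Q^{t/2}\pmod{m}$ to obtain $V_t^2\equiv 4Q^t\pmod{m}$; the squaring is what renders the hypothesis unambiguous, since only $Q^t$ appears on the right, regardless of whether $Q^{t/2}$ is itself a well defined integer. Substituting into (\ref{eq:5}) then gives
\[
V_{2t}\equiv V_t^2-2Q^t\equiv 4Q^t-2Q^t=2Q^t\pmod{m},
\]
which is the desired conclusion.

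There is really no substantial obstacle here, as the lemma is essentially a one-line consequence of a classical identity. The only point requiring a bit of care is the interpretation of the expression $\pm 2Q^{t/2}$ when $t$ is odd; I would simply read the hypothesis as shorthand for $V_t^2\equiv 4Q^t\pmod{m}$, which is what the argument actually uses. In the likely intended applications, for instance in combination with Lemma \ref{lem:Vro} where the rank $\rho$ is even, $t$ itself will be even and $Q^{t/2}$ will be a genuine integer, so this interpretive ambiguity does not arise in practice.
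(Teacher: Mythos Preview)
Your proof is correct and is essentially identical to the paper's own one-line argument, which simply writes $V_{2t}=V_t^2-2Q^t\equiv 2Q^t\pmod m$ using identity~(\ref{eq:5}). Your additional remark about interpreting $Q^{t/2}$ is a reasonable clarification, but the mathematical content matches the paper exactly.
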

\begin{proof} We have $V_{2t}=V_t^2-2Q^t\equiv2Q^t\pmod m$.
\end{proof}

 We are now ready for a proof of Theorem \ref{thm:N}. 

\begin{proof} We have 
$$
\binom{(k+1)\rho-1}{\rho-1}_U=\frac{\prod_{t=1}^{\rho-1}U_{k\rho+t}}{\prod_{t=1}^{\rho-1}U_t}.
$$
By the addition formula (\ref{eq:1}), we find that 
\begin{eqnarray*}
2^{\rho-1}\prod_{t=1}^{\rho-1}U_{k\rho+t}&=&\prod_{t=1}^{\rho-1}(V_{k\rho} U_t+U_{k\rho} V_t)\\
&\equiv&(V_{k\rho}^{\rho-1}+V_{k\rho}^{\rho-2}U_{k\rho}\Sigma_1+V_{k\rho}^{\rho-3}U_{k\rho}^2
\Sigma_{1,1})\times\prod_{t=1}^{\rho-1}U_t\label{eq:dev}\\
&\equiv&(V_{k\rho}^{\rho-1}+V_{k\rho}^{\rho-3}U_{k\rho}^2
\Sigma_{1,1})\times\prod_{t=1}^{\rho-1}U_t\pmod {p^3},
\end{eqnarray*} since $p$ divides $U_{k\rho}$ and, by Lemma \ref{lem:Sig}, $\Sigma_1$ is $0\pmod {p^2}$. 

 We first examine the cases $\rho$ is $p+1$ and $\rho$ is $p$. In those cases 
$U_{k\rho}^2\Sigma_{1,1}$ is $0\pmod{p^3}$ by Lemma \ref{lem:Sig}. 
Hence, 
$$
\binom{(k+1)\rho-1}{\rho-1}_U\equiv\bigg(\frac{V_{k\rho}}{2}\bigg)^{\rho-1}\pmod{p^3}. 
$$ 
If $\rho$ is $p$, then, by (\ref{eq:3}) and the fact that 
$p^3\mid DU_{k\rho}^2$, we see that $V_{k\rho}^2\equiv4Q^{k\rho}\pmod{p^3}$. 
Therefore, 
$$ 
\binom{(k+1)\rho-1}{\rho-1}_U\equiv (Q^{k\rho})^{(\rho-1)/2}\pmod{p^3},
$$ yielding the result in that case. 
If $\rho$ is $p+1$ and $k$ is odd, then by Lemma \ref{lem:Vro} there is an integer 
$\lambda$ such that $\frac{V_{k\rho}}{2}=-Q^{k\rho/2}+\lambda p^2$. 
Raising members of the previous equation to the $p$th power gives $(V_{k\rho}/2)^p\equiv-Q^{k\rho p/2}
\pmod{p^3}$. But $-1=(-1)^{-k}$ so the theorem follows in that case. 

If $\rho$ is $p+1$ and $k=2^a\ell$, where $\ell$ is odd and $a\ge1$, then, by Lemma \ref{lem:Vro}, 
we have $V_{\ell\rho}\equiv-2Q^{\ell\rho/2} \pmod{p^2}$. Applying $a$ times Lemma \ref{lem:V2} 
we see that $V_{k\rho}\equiv2Q^{k\rho/2}\pmod{p^2}$. As we did in the case $k$ odd, we raise both 
sides of the congruence to the $p$th power to obtain $(V_{k\rho}/2)^p\equiv Q^{k\rho(\rho-1)/2}=
(-1)^{k\e_p}Q^{k\rho(\rho-1)/2}\pmod{p^3}$ and the theorem follows.

Suppose now $\e_p$ is $1$, that is, $\rho$ is $p-1$. By Lemma \ref{lem:Sig}, $\Sigma_{1,1}\equiv D\pmod p$ so 
that $U_{k\rho}^2\Sigma_{1,1}\equiv DU_{k\rho}^2\pmod{p^3}$. But, by (\ref{eq:3}), $DU_{k\rho}^2=
V_{k\rho}^2-4Q^{k\rho}$. Therefore, we have 
$$
2^{\rho-1}\binom{(k+1)\rho-1}{\rho-1}_U\equiv2V_{k\rho}^{\rho-1}-4Q^{k\rho}V_{k\rho}^{\rho-3}\pmod{p^3}.
$$
This gives 
\begin{equation}\label{eq:long}
\binom{(k+1)\rho-1}{\rho-1}_U\equiv\bigg(\frac{V_{k\rho}}{2}\bigg)^p\bigg[2\bigg(\frac2{V_{k\rho}}\bigg)^2
-Q^{k\rho}\bigg(\frac2{V_{k\rho}}\bigg)^4\bigg]\pmod{p^3}.
\end{equation}  
By Lemma \ref{lem:Vro}, we have $V_{k\rho}/2\equiv-Q^{k\rho/2}\pmod {p^2}$ in case $k$ is odd. Using Lemma 
\ref{lem:V2}, as for the case $\rho=p+1$, we get that $V_{k\rho}/2\equiv Q^{k\rho/2}\pmod {p^2}$ if 
$k$ is even. Thus, generally, $V_{k\rho}/2\equiv(-1)^kQ^{k\rho/2}\pmod {p^2}$.  Raising the previous congruence 
to the $p$th power yields $(V_{k\rho}/2)^p\equiv(-1)^kQ^{kp\rho/2}\pmod{p^3}$, while inverting 
it yields the existence of an integer $\mu$ such that $2/V_{k\rho}\equiv(-1)^kQ^{-k\rho/2}+\mu p^2
\pmod{p^3}$. Thus, with $\alpha_{p,k}:=$ the bracket factor of the righthand side of (\ref{eq:long}), we find that modulo $p^3$
\begin{eqnarray*}
\alpha_{p,k}&\equiv&2\big((-1)^kQ^{-k\rho/2}+\mu p^2\big)^2
-Q^{k\rho}\big((-1)^kQ^{-k\rho/2}+\mu p^2\big)^4\\
&\equiv&(2Q^{-k\rho}+(-1)^k4Q^{-k\rho/2}\mu p^2)-Q^{k\rho}(Q^{-2k\rho}+(-1)^k4Q^{-3k\rho/2}\mu p^2)\\
&=&Q^{-k\rho}.
\end{eqnarray*}
Thus, we end up with 
$$
\binom{(k+1)\rho-1}{\rho-1}_U\equiv(-1)^kQ^{kp\rho/2}Q^{-k\rho}=(-1)^{k\e_p}Q^{k\rho(p-2)/2}\pmod{p^3},
$$
which yields the theorem.
\end{proof}

 The above proof is the first that came to us. It proceeds case by case according to whether the 
value of the rank of $p$ is $p+1$, $p$ or $p-1$ and, thus, appears somewhat miraculous. Although we 
initially wrote case by case proofs for the higher congruences of Section \ref{sec:4}, we ended up 
finding a global and more natural approach at least for Theorems \ref{thm:pcinq2} and \ref{thm:pcinq3}.  

\section{Lucanomials $\binom{k\rho}{\ell\rho}_U\pmod{p^3}$}
\label{sec:3}

Here is our common generalization of the Ljunggren et al. congruence (\ref{eq:Viggo}) 
and Kimball and Webb's theorem (\ref{eq:KW2}). 

\begin{theorem}\label{thm:LjWe} Let $U$, $V$ be a pair of Lucas sequences with parameters $P$ and 
$Q$. Let $p\ge5$, $p\nmid Q$, be a prime whose rank of appearance $\rho$ is    
equal to $p-\e_p$, $\epsilon_p$ being $0$ or $\pm1$. 
Then, for all nonnegative integers $k$ and $\ell$, we have  
\begin{equation}\label{eq:Lju}
\binom{k\rho}{\ell\rho}_U\equiv\binom{k}{\ell}_{U'}(-1)^{\ell(k-\ell)\e_p}Q^{\ell(k-\ell)\rho(\rho-1)/2}\pmod {p^3},
\end{equation}
where $U'$ is the sequence $U_\rho\times U(V_\rho,Q^\rho)$. 
\end{theorem}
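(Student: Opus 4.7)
The plan is to decompose $\binom{k\rho}{\ell\rho}_U$ into a ``sparse'' factor built only from the terms $U_{j\rho}$ (these will reassemble into $\binom{k}{\ell}_{U'}$) and a ``residual'' factor that is a quotient of Lucanomials of the form $\binom{j\rho-1}{\rho-1}_U$, to which Theorem \ref{thm:N} applies directly. Assuming $1\le\ell\le k$ (the cases $\ell=0$ and $\ell>k$ are trivial on both sides), I write
$$
\binom{k\rho}{\ell\rho}_U=\frac{\prod_{t=(k-\ell)\rho+1}^{k\rho}U_t}{\prod_{t=1}^{\ell\rho}U_t},
$$
and partition both numerator and denominator into $\ell$ consecutive blocks of length $\rho$, the $j$-th block being $[(j-1)\rho+1,\,j\rho]$, where $j$ runs over $\{k-\ell+1,\dots,k\}$ in the numerator and over $\{1,\dots,\ell\}$ in the denominator. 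Pulling out the terminal term $U_{j\rho}$ of each block yields
$$
\binom{k\rho}{\ell\rho}_U=\frac{\prod_{j=k-\ell+1}^{k}U_{j\rho}}{\prod_{j=1}^{\ell}U_{j\rho}}\cdot\frac{\prod_{j=k-\ell+1}^{k}\prod_{t=1}^{\rho-1}U_{j\rho-t}}{\prod_{j=1}^{\ell}\prod_{t=1}^{\rho-1}U_{j\rho-t}}.
$$

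Next I would identify the first factor with $\binom{k}{\ell}_{U'}$ via the classical subsequence identity $U_{m\rho}=U_\rho\cdot U_m(V_\rho,Q^\rho)$, which holds because $\alpha^\rho$ and $\beta^\rho$ are the characteristic roots of $x^2-V_\rho x+Q^\rho$. This says $U'_m=U_{m\rho}$, and so the first ratio above is literally $\binom{k}{\ell}_{U'}$. For the second factor, each inner product rewrites as
$$
\prod_{t=1}^{\rho-1}U_{j\rho-t}=\binom{j\rho-1}{\rho-1}_U\cdot\prod_{t=1}^{\rho-1}U_t,
$$
and the $\ell$ common factors $\prod_{t=1}^{\rho-1}U_t$ cancel between numerator and denominator, leaving the quotient
$$
\prod_{j=k-\ell+1}^{k}\binom{j\rho-1}{\rho-1}_U\bigg/\prod_{j=1}^{\ell}\binom{j\rho-1}{\rho-1}_U.
$$

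Finally, applying Theorem \ref{thm:N} with index $j-1$ to each Lucanomial gives $\binom{j\rho-1}{\rho-1}_U\equiv(-1)^{(j-1)\e_p}Q^{(j-1)\rho(\rho-1)/2}\pmod{p^3}$ for every $j\ge1$. Since $p\nmid Q$, each right-hand side is a unit modulo $p^3$; the congruences therefore survive inversion and multiplication, and the quotient reduces to $(-1)^{S\e_p}Q^{S\rho(\rho-1)/2}\pmod{p^3}$, where
$$
S=\sum_{j=k-\ell+1}^{k}(j-1)-\sum_{j=1}^{\ell}(j-1)=\ell(k-\ell).
$$
Combining with the first factor delivers the claimed congruence. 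The only substantive obstacle is the $p$-adic bookkeeping in this last step---namely, confirming that each $\binom{j\rho-1}{\rho-1}_U$ is a $p$-adic unit so that the quotient congruence modulo $p^3$ is valid---and this is handed to us for free by Theorem \ref{thm:N} itself.
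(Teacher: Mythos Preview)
Your proposal is correct and follows essentially the same route as the paper: factor out the terms $U_{j\rho}$ to form $\binom{k}{\ell}_{U'}$, rewrite the residual as a quotient of Lucanomials $\binom{j\rho-1}{\rho-1}_U$, and reduce each of these via Theorem~\ref{thm:N}. The only cosmetic difference is that the paper routes the last step through Remark~\ref{rem:pow} (writing each $\binom{j\rho-1}{\rho-1}_U$ as a power of $\binom{2\rho-1}{\rho-1}_U$) before invoking Theorem~\ref{thm:N}, whereas you apply Theorem~\ref{thm:N} term by term; the arithmetic $S=\ell(k-\ell)$ is identical.
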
 
\begin{proof} We only need a proof in case $k>\ell\ge1$. With convention (\ref{eq:conv}) we may write 
\begin{eqnarray*} \binom{k\rho}{\ell\rho}_U & = & \frac{U_{k\rho}U_{k\rho-1}\cdots U_{(k-\ell)\rho+1}}
{U_{\ell\rho}U_{\ell\rho-1}\cdots U_1}\\
& = & \frac{U_{k\rho}U_{(k-1)\rho}\cdots U_{(k-\ell+1)\rho}}{U_{\ell\rho} U_{(\ell-1)\rho}\cdots 
U_\rho}\cdot\frac{\prod_{i=k-\ell}^{k-1}
\prod_{t=1}^{\rho-1}U_{i\rho+t}}{\prod_{i=0}^{\ell-1}\prod_{t=1}^{\rho-1}U_{i\rho+t}}\\
& = & \binom{k}{\ell}_{U'}\cdot\frac{\prod_{i=k-\ell}^{k-1}\prod_{t=1}^{\rho-1}U_{i\rho+t}}
{\big(\prod_{t=1}^{\rho-1}U_t\big)^\ell}\cdot\frac{\big(\prod_{t=1}^{\rho-1}U_t\big)^\ell}
{\prod_{i=0}^{\ell-1}\prod_{t=1}^{\rho-1}U_{i\rho+t}}\\
& = &  \binom{k}{\ell}_{U'}\cdot\prod_{i=k-\ell}^{k-1}\binom{(i+1)\rho-1}{\rho-1}_U\cdot\bigg(
\prod_{i=0}^{\ell-1}\binom{(i+1)\rho-1}{\rho-1}_U\bigg)^{-1}\\
&\equiv &\binom{k}{\ell}_{U'}\cdot\binom{2\rho-1}{\rho-1}_U^{\sum_{i=k-\ell}^{k-1}i-\sum_{i=0}^{\ell-1}i}
\qquad(\text{ by Remark \ref{rem:pow} })\\
&=&\binom{k}{\ell}_{U'}\cdot\binom{2\rho-1}{\rho-1}_U^{\ell(k-\ell)}\pmod{p^3},
\end{eqnarray*}
yielding, by Theorem \ref{thm:N}, the theorem. 
\end{proof}

\begin{remark} If, in Theorem \ref{thm:LjWe}, $U_\rho\not=0$ then we might as well set $U'$ equal to 
$U(V_\rho,Q^\rho)$. 
\end{remark}

\begin{remark} If $U=U(2,1)$, then $U_t=t$ and $U'_t=pt$, or $U'_t=t$ by the above remark. 
Thus the theorem implies that 
$$
\binom{kp}{\ell p}\equiv\binom k\ell_{U'}=\binom k\ell\pmod{p^3},
$$ which is the classical congruence (\ref{eq:Viggo}) of Ljunggren et alii. For $U=U(1,-1)$     
and $\epsilon_p=\pm1$ we saw in Remark \ref{rem:kw} that $\epsilon_p=-(-1)^{\rho(\rho-1)/2}
=-Q^{\rho(\rho-1)/2}$ so that Theorem \ref{thm:LjWe} implies (\ref{eq:KW2}).
\end{remark}
 
 Since we took care of including all cases of Lucas sequences in our theorems, we provide 
an example of an application of Theorem \ref{thm:LjWe} to a degenerate Lucas sequence. 

\begin{example} Consider $U(2,2)$.  Its first terms are 
$$0,1,2,2,0,-4,-8,-8,0,16,32,32,0,\dots$$ 
So Theorem \ref{thm:LjWe} applies to $p=5$ since its rank is maximal and equal to $4$. Choose, 
say $k=3$ and $\ell=2$. By our extended definition of (\ref{eq:def}), we have $\binom{3}{2}_{U'}=1$ 
and $(-1)^{\ell(k-\ell)\e_p}Q^{\ell(k-\ell)\rho(\rho-1)/2}=2^{12}$. Computing 
$\binom{12}{8}_U$ we may verify the congruence modulo $125$, which in that case is an equality, since 
$$
\binom{12}{8}_U=\frac{U_{11}\cdot U_{10}\cdot U_9}{U_3\cdot U_2\cdot U_1}=
\frac{16\cdot32\cdot32}{2\cdot2\cdot1}=2^{12}.
$$
\end{example}

\section{Lucanomials $\binom{2\rho-1}{\rho-1}_U\pmod{p^5}$} 
\label{sec:4}

 The congruence of Wolstenholme has been studied to prime powers higher than the third. In particular, 
we have, for all primes $p\ge7$,
\begin{eqnarray}
\binom{2p-1}{p-1}&\equiv&1+p\sum_{0<t<p}\frac1{t}+p^2\sum_{0<s<t<p}\frac{1}{st}\pmod{p^5}\label{eq:i}\\
                 &\equiv&1+2p\sum_{0<t<p}\frac1{t}\pmod{p^5}\label{eq:ii}\\
                 &\equiv&1-p^2\sum_{0<t<p}\frac1{t^2}\pmod{p^5}\label{eq:iii}.
\end{eqnarray} 

 We will find congruences for the Lucanomial coefficients $\binom{2\rho-1}{\rho-1}_U$, valid for a general 
fundamental Lucas sequence $U$, modulo the fifth power of a prime 
of maximal rank $\rho$, which generalize the three congruences above. Expanding 
the binomial $\binom{2p-1}{p-1}$, as was done more generally for Lucanomials in the proof 
of Theorem \ref{thm:N}, one falls naturally on the congruence (\ref{eq:i}). This expansion 
appears, for instance, in the proof of Proposition 1 in \cite{Mes1}.  
Congruence (\ref{eq:ii}) is a special case of 
Theorem 3 of the paper \cite{Zh} and was known to hold for primes $p\ge5$ modulo $p^4$ much 
earlier, while congruence (\ref{eq:iii}) appears in \cite{McI}, p. 385. 

\medskip

 To complete the notation introduced in (\ref{eq:nota}) we define the symbols 
$\Sigma_{1,\nu}$ ($\nu=2$ or $3$), $\Sigma_{2,2}$, $\Sigma_{1,1,1}$, $\Sigma_{1,1,2}$ and $\Sigma_{1,1,1,1}$,  
respectively, as the sums 
$$
\sum_{s,\,t}\frac{V_sV_t^\nu}{U_sU_t^\nu},\;\sum_{s<t}\frac{V_s^2V_t^2}{U_s^2U_t^2},\;
\sum_{r<s<t}\frac{V_rV_sV_t}{U_rU_sU_t},\;\sum_{\substack{r<s,\\t\in(0,\rho)}}\frac{V_rV_sV_t^2}{U_rU_sU_t^2},\;
\sum_{q<r<s<t}\frac{V_qV_rV_sV_t}{U_qU_rU_sU_t},
$$  where in each sum $q$, $r$, $s$ and $t$ are distinct integers in the interval $(0,\rho)$ and 
$\rho$ is the rank of a prime $p$.  

\begin{lemma}\label{lem:sigmas} We have for all primes $p\ge7$ of maximal ranks
$$
\Sigma_{1,1,1}\equiv0\pmod{p^2} \text{ and } \Sigma_{1,1,1,1}\equiv\begin{cases}
0\pmod p,\quad\text{ if }\e_p=0 \text{ or }-1;\\
D^2\pmod p,\quad\text{ if }\e_p=1.
\end{cases}
$$
\end{lemma}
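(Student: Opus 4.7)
The plan is to recognize the $\Sigma$-symbols in the lemma as elementary symmetric polynomials in the ratios $x_t := V_t/U_t$, $t \in (0,\rho)$, so that $\Sigma_{1,1} = e_2$, $\Sigma_{1,1,1} = e_3$, $\Sigma_{1,1,1,1} = e_4$, while $\Sigma_\nu = p_\nu$ is the $\nu$th power sum in the $x_t$'s. Then I would use Newton's identities to reduce to power sums already controlled by Lemmas \ref{lem:psquare} and \ref{lem:Sig}. Written explicitly in our notation, the relevant identities are
\begin{align*}
3\,\Sigma_{1,1,1} &= \Sigma_3 - \Sigma_1\Sigma_2 + \Sigma_{1,1}\,\Sigma_1,\\
4\,\Sigma_{1,1,1,1} &= -\Sigma_4 + \Sigma_1\Sigma_3 - \Sigma_{1,1}\,\Sigma_2 + \Sigma_{1,1,1}\,\Sigma_1.
\end{align*}

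For the first assertion, Lemma \ref{lem:psquare} gives $\Sigma_1 \equiv \Sigma_3 \equiv 0 \pmod{p^2}$, so the right-hand side of the first identity is $\equiv 0 \pmod{p^2}$; since $p \geq 7$, the factor $3$ is invertible modulo $p^2$, and we conclude $\Sigma_{1,1,1} \equiv 0 \pmod{p^2}$.

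For the second assertion, I would work modulo $p$. The first claim together with Lemma \ref{lem:psquare} makes $\Sigma_1$, $\Sigma_3$ and $\Sigma_{1,1,1}$ all vanish modulo $p$, so the second identity collapses to $4\,\Sigma_{1,1,1,1} \equiv -\Sigma_4 - \Sigma_{1,1}\Sigma_2 \pmod{p}$. Substituting the residues of $\Sigma_2$ and $\Sigma_4$ from Lemma \ref{lem:psquare} and of $\Sigma_{1,1}$ from Lemma \ref{lem:Sig} across the cases $\e_p \in \{0,-1,1\}$ closes the proof: the right-hand side vanishes when $\e_p \in \{0,-1\}$, while for $\e_p = 1$ it becomes $-(-2D^2) - D\cdot(-2D) = 4D^2$, whence $\Sigma_{1,1,1,1} \equiv D^2 \pmod p$ after dividing by $4$.

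I do not anticipate a serious obstacle. The only things to keep an eye on are the invertibility of the small integers $3$ and $4$ modulo the relevant power of $p$, which is automatic since $p \geq 7$, and careful bookkeeping of how many powers of $p$ each $\Sigma_\nu$ is known to vanish to, so that the Newton identities really deliver the claimed moduli. The same symmetric-function viewpoint should, incidentally, suggest a uniform strategy for any further sums of this type that may arise in Section \ref{sec:4}.
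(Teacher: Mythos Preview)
Your argument is correct. Recognizing the $\Sigma_{1,\dots,1}$ as elementary symmetric polynomials and the $\Sigma_\nu$ as power sums, and then invoking Newton's identities, is exactly the right move; the bookkeeping and the invertibility of $3$ and $4$ for $p\ge7$ are handled correctly, and the case split for $\e_p$ gives the stated residues.

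The paper reaches the same conclusions by a slightly different route: instead of applying Newton's identities directly, it introduces the mixed sums $\Sigma_{1,2}$, $\Sigma_{2,2}$, $\Sigma_{1,3}$, $\Sigma_{1,1,2}$ and sets up small linear systems (for instance, $\Sigma_1^3-\Sigma_3=3\Sigma_{1,2}+6\Sigma_{1,1,1}$ and $\Sigma_1\Sigma_{1,1}=\Sigma_{1,2}+3\Sigma_{1,1,1}$) whose determinants are units modulo the relevant power of $p$. Eliminating the auxiliary sums from these systems recovers precisely Newton's identities, so the two arguments are algebraically equivalent; yours is the more conceptual and compact packaging, while the paper's hands-on version has the minor side benefit of evaluating the intermediate mixed sums (such as $\Sigma_{1,2}$ and $\Sigma_{1,1,2}$) along the way, which are reused later in the modulo-$p^6$ theorem.
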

\begin{proof} We have the linear system 
\begin{eqnarray*} 
\Sigma_1^3-\Sigma_3 & = & 3\Sigma_{1,2}+6\Sigma_{1,1,1},\\
\Sigma_1\cdot\Sigma_{1,1} & = & \Sigma_{1,2}+3\Sigma_{1,1,1}. 
\end{eqnarray*}
Because $p^2$ divides both $\Sigma_1$ and $\Sigma_3$, $\Sigma_1^3-\Sigma_3$ and $\Sigma_1\cdot\Sigma_{1,1}$ 
are each $0\pmod {p^2}$. Since the determinant of the system is prime to $p$, 
$\Sigma_{1,2}$ and $\Sigma_{1,1,1}$ are both $0\pmod {p^2}$. 

 From Lemma \ref{lem:psquare} with $p>5$, which yields the values of $\Sigma_2$ and $\Sigma_4\pmod p$, we deduce that   
$$
\Sigma_{2,2}=\frac12\big[\Sigma_2^2-\Sigma_4\big]\equiv\begin{cases}0\pmod p,
\text { if }\e_p=0\text{ or }-1;\\
3D^2\pmod p,\text { if }\e_p=1.
\end{cases}
$$ 

 Now $\Sigma_{1,3}=\Sigma_1\cdot\Sigma_3-\Sigma_4\implies\Sigma_{1,3}\equiv-\Sigma_4\pmod p$. Moreover, 
$2\Sigma_{1,1,2}+2\Sigma_{2,2}+\Sigma_{1,3}=\Sigma_{1,2}\cdot\Sigma_1\equiv0\pmod p$.

 Thus, $\Sigma_{1,1,2}$ is $0\pmod p$, if $\e_p$ is $0$ or $-1$, and $\Sigma_{1,1,2}$ is 
$-4D^2\pmod p$, if $\e_p$ is $1$.   

Therefore, as $6\Sigma_{1,1,1,1}=\Sigma_{1,1}^2-\Sigma_{2,2}-2\Sigma_{1,1,2}$, we obtain, 
using Lemma \ref{lem:Sig}, the 
desired congruences for $\Sigma_{1,1,1,1}$.  
\end{proof}

 Our first theorem is a generalization of congruence (\ref{eq:i}). 

\begin{theorem}\label{thm:pcinq1} Let $(U,V)$ be a pair of Lucas sequence with parameters $P$ and $Q$. 
Let $p$ be a prime at least $7$ of maximal rank $\rho$ equal to $p-\e_p$. Then 
$$
\binom{2\rho-1}{\rho-1}_U\equiv
\bigg(\frac{V_\rho}{2}\bigg)^{\rho-1}\bigg[1+\frac{U_\rho}{V_\rho}\sum_{0<t<\rho}
\frac{V_t}{U_t}+\frac{U_\rho^2}{V_\rho^2}\sum_{0<s<t<\rho}\frac{V_sV_t}{U_sU_t}+R\bigg]\pmod{p^5},
$$
$$\text{ where }\;R=\frac{\epsilon_p(1+\epsilon_p)}2\frac{D^2U_\rho^4}{V_\rho^4}=
\begin{cases}0,\;\,\quad\qquad\text{ if }\e_p=0\text{ or }-1;\\
D^2U_\rho^4/V_\rho^4,\text{ if }\e_p=1.
\end{cases}
$$
\end{theorem}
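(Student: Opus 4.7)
The plan is to push the expansion used in the proof of Theorem~\ref{thm:N} one order further and keep all terms that can contribute modulo $p^5$. Starting from
$$\binom{2\rho-1}{\rho-1}_U=\frac{\prod_{t=1}^{\rho-1}U_{\rho+t}}{\prod_{t=1}^{\rho-1}U_t}$$
and applying $2U_{\rho+t}=V_\rho U_t+U_\rho V_t$ from~(\ref{eq:1}), I factor $V_\rho U_t$ out of each numerator factor to obtain
$$\binom{2\rho-1}{\rho-1}_U=\left(\frac{V_\rho}{2}\right)^{\rho-1}\prod_{t=1}^{\rho-1}\!\left(1+\frac{U_\rho}{V_\rho}\cdot\frac{V_t}{U_t}\right)=\left(\frac{V_\rho}{2}\right)^{\rho-1}\sum_{j=0}^{\rho-1}\left(\frac{U_\rho}{V_\rho}\right)^{\!j}e_j,$$
where $e_j$ denotes the $j$-th elementary symmetric function of $V_1/U_1,\dots,V_{\rho-1}/U_{\rho-1}$. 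In particular $e_0=1$, $e_1=\Sigma_1$, $e_2=\Sigma_{1,1}$, $e_3=\Sigma_{1,1,1}$, and $e_4=\Sigma_{1,1,1,1}$.

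The second step is valuation bookkeeping. Identity~(\ref{eq:3}) together with $p\nmid 2Q$ shows that $V_\rho$ is a $p$-adic unit, while $v_p(U_\rho)\ge 1$ by definition of the rank, and $p\nmid U_t$ for $0<t<\rho$, so each $V_t/U_t$, and hence each $e_j$, is a $p$-adic integer. The $j$-th summand therefore has $p$-adic valuation at least $j+v_p(e_j)$. Every $j\ge 5$ term is already $\equiv 0\pmod{p^5}$. For $j=3$, Lemma~\ref{lem:sigmas} gives $v_p(\Sigma_{1,1,1})\ge 2$, so the term also vanishes modulo $p^5$. For $j=4$, when $\epsilon_p\in\{0,-1\}$ the same lemma yields $v_p(\Sigma_{1,1,1,1})\ge 1$ and the term vanishes; when $\epsilon_p=1$ we have $\Sigma_{1,1,1,1}\equiv D^2\pmod p$, so the term reduces modulo $p^5$ to $(U_\rho/V_\rho)^4 D^2$, which is precisely the $R$ of the statement.

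What is left is $j=0,1,2$, together with, in the case $\epsilon_p=1$, the $j=4$ contribution, and these assemble directly into the claimed expression. The main obstacle is the valuation accounting at $j=3$ and $j=4$: one must verify that the bounds from Lemma~\ref{lem:sigmas}, combined with the $j$ guaranteed factors of $U_\rho$, reach valuation at least $5$, and that the residue of $\Sigma_{1,1,1,1}$ modulo $p$ in the case $\epsilon_p=1$ packages itself exactly as $R$. Once this is done, the prefactor $(V_\rho/2)^{\rho-1}$ is a unit modulo $p^5$, so the congruence inside the bracket propagates to the asserted congruence.
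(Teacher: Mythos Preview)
Your proof is correct and follows essentially the same route as the paper's: expand $\prod_{t=1}^{\rho-1}(V_\rho U_t+U_\rho V_t)$ through fourth order in $U_\rho$, then invoke Lemma~\ref{lem:sigmas} to kill the $\Sigma_{1,1,1}$ term and to replace $\Sigma_{1,1,1,1}$ by $D^2$ or $0$ according to $\epsilon_p$. Your phrasing in terms of elementary symmetric functions and explicit $p$-adic valuation bookkeeping is a bit more detailed than the paper's, but the argument is the same.
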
 
\begin{proof} Expanding the product $2^{\rho-1}\prod_{t=1}^{\rho-1}U_{\rho+t}=
\prod_{t=1}^{\rho-1}(V_\rho U_t+U_\rho V_t)$  
as we did early in the proof of Theorem \ref{thm:N}, but up to the fourth power of $U_\rho$, 
yields that $2^{\rho-1}\binom{2\rho-1}{\rho-1}_U$ is congruent to  
$$
V_\rho^{\rho-1}+V_\rho^{\rho-2}U_\rho\Sigma_1+
V_\rho^{\rho-3}U_\rho^2\Sigma_{1,1}+V_\rho^{\rho-4}U_\rho^3\Sigma_{1,1,1}+
V_\rho^{\rho-5}U_\rho^4\Sigma_{1,1,1,1}\pmod {p^5}.
$$ Applying the congruences obtained in Lemma \ref{lem:sigmas} to the last two terms 
of the above sum yields the theorem.
\end{proof}

 We now prove a congruence formula that generalizes (\ref{eq:ii}), but 
also generalizes Theorem \ref{thm:N} when $k=1$. The method of proof  
brings out the factor $(-1)^{\epsilon_p} Q^{\rho(\rho-1)/2}$ naturally.  
It is particularly appealing because it only contains two terms, no more than (\ref{eq:ii}), 
and is valid regardless of the values of the maximal rank $\rho$.    

\begin{theorem}\label{thm:pcinq2} Let $(U,V)$ be a pair of Lucas sequence with parameters $P$ and $Q$. 
Let $p$ be a prime at least $7$ of maximal rank $\rho$ equal to $p-\e_p$. Then
$$
\binom{2\rho-1}{\rho-1}_U\equiv
(-1)^{\e_p}Q^{\frac{\rho(\rho-1)}2}\bigg[1+2\frac{U_\rho}{V_\rho}\sum_{0<t<\rho}\frac{V_t}{U_t}
\bigg]\pmod{p^5}.
$$ 
\end{theorem}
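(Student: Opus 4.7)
The plan is to derive a clean multiplicative identity for the binomial and then peel off the answer modulo $p^5$ via a $p$-adic logarithm. Set $\lambda=U_\rho/V_\rho$ and $r_t=V_t/U_t$; these lie in $\mathbb Z_p$ because $U_t$ is a unit for $0<t<\rho$ and, by (\ref{eq:3}), $V_\rho^2\equiv 4Q^\rho$ is a unit, so $v_p(\lambda)\ge 1$. The addition formula (\ref{eq:1}) yields $2U_{\rho+t}=V_\rho U_t+U_\rho V_t$, hence
$$
\binom{2\rho-1}{\rho-1}_U=\frac{1}{2^{\rho-1}}\prod_{t=1}^{\rho-1}(V_\rho+U_\rho r_t).
$$
Meanwhile, (\ref{eq:6}) with $s=\rho$ gives $V_\rho U_t-U_\rho V_t=-2Q^t U_{\rho-t}$; reindexing $\prod_t U_{\rho-t}=\prod_t U_t$ and noting that $\rho-1$ and $\e_p$ share parity, one obtains the companion identity
$$
\prod_{t=1}^{\rho-1}(V_\rho-U_\rho r_t)=(-2)^{\rho-1}Q^{\rho(\rho-1)/2}=(-1)^{\e_p}2^{\rho-1}Q^{\rho(\rho-1)/2}.
$$
Dividing the two relations produces the key identity
$$
\binom{2\rho-1}{\rho-1}_U=(-1)^{\e_p}Q^{\rho(\rho-1)/2}\prod_{t=1}^{\rho-1}\frac{1+\lambda r_t}{1-\lambda r_t},
$$
in which the expected prefactor appears \emph{automatically}, so the task reduces to evaluating the symmetric product modulo $p^5$.

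To do this I take the $p$-adic logarithm. Since $v_p(\lambda r_t)\ge 1$, the series $\log\frac{1+x}{1-x}=2\sum_{k\ge 0}\frac{x^{2k+1}}{2k+1}$ sums termwise over $t$ to
$$
\log\prod_{t=1}^{\rho-1}\frac{1+\lambda r_t}{1-\lambda r_t}=2\sum_{k\ge 0}\frac{\lambda^{2k+1}}{2k+1}\,\Sigma_{2k+1};
$$
only odd-index $\Sigma$'s appear, which is the structural reason $\Sigma_{1,1}$ will disappear. By Lemma \ref{lem:psquare} with $\nu=3$ (valid because $p\ge 7\ge\nu+3$), $\Sigma_3\equiv 0\pmod{p^2}$, so the $k=1$ term has valuation $\ge 3+2-v_p(3)=5$. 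For $k\ge 2$ the factor $\lambda^{2k+1}$ alone has valuation $\ge 2k+1\ge 5$, and the divisor $2k+1$ is easily absorbed since $v_p(2k+1)\le \log_p(2k+1)\ll 2k-2$. Hence every $k\ge 1$ term vanishes modulo $p^5$, leaving $\log\prod\equiv 2\lambda\Sigma_1\pmod{p^5}$. As $v_p(2\lambda\Sigma_1)\ge 3$, exponentiating gives
$$
\prod_{t=1}^{\rho-1}\frac{1+\lambda r_t}{1-\lambda r_t}\equiv 1+2\lambda\Sigma_1\pmod{p^5},
$$
the quadratic term $(2\lambda\Sigma_1)^2/2$ of $\exp$ having valuation $\ge 6$. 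Substituting into the key identity yields the theorem.

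The main obstacle is the boundary term $\tfrac{2}{3}\lambda^3\Sigma_3$: its $p$-adic valuation is exactly $3+2=5$, and it is precisely Lemma \ref{lem:psquare} at $\nu=3$---requiring $p\ge 7$---that puts it on the right side of the threshold. Beyond this delicate check everything is uniform in the value of $\e_p$, which is the ``global and more natural approach'' the paper advertises, in contrast with the case-by-case proof of Theorem \ref{thm:N}.
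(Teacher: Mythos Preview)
Your argument is correct and, while it starts from the same two product identities as the paper (via (\ref{eq:1}) and (\ref{eq:6})), it packages them differently. The paper expands $\prod(1+\lambda r_t)$ and $\prod(1-\lambda r_t)$ separately in the elementary symmetric sums $\Sigma_1,\Sigma_{1,1},\Sigma_{1,1,1},\Sigma_{1,1,1,1}$, subtracts so that the even-index terms cancel, and then invokes Lemma~\ref{lem:sigmas} (itself derived from Newton-type relations reducing $\Sigma_{1,1,1}$ to $\Sigma_1,\Sigma_3$) together with (\ref{eq:blanc}) to finish. You instead divide the two products to obtain the exact identity $\binom{2\rho-1}{\rho-1}_U=(-1)^{\e_p}Q^{\rho(\rho-1)/2}\prod\frac{1+\lambda r_t}{1-\lambda r_t}$ and take a $p$-adic logarithm, so that only the odd power sums $\Sigma_{2k+1}$ appear and Lemma~\ref{lem:psquare} at $\nu=3$ suffices directly. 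This buys you a self-contained proof that bypasses both Theorem~\ref{thm:pcinq1} and Lemma~\ref{lem:sigmas}, and makes the parity cancellation structural rather than a computed coincidence; the paper's route, on the other hand, stays within finite polynomial expansions and avoids any appeal to $p$-adic analysis.
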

\begin{proof} All unmarked sums and products are for $t$ running from $1$ to $\rho-1$. Note that 
$\prod U_t=\prod U_{\rho-t}$. Thus by (\ref{eq:6}) we may write 
\begin{eqnarray*}
2^{\rho-1}Q^{\sum t}\prod U_t & = & \prod2Q^tU_{\rho-t}=\prod(U_\rho V_t-V_\rho U_t)\\
& = & (-V_\rho)^{\rho-1}\prod\bigg(1-\frac{U_\rho}{V_\rho}\frac{V_t}{U_t}\bigg)\prod U_t.
\end{eqnarray*}
Therefore 
$$
(-1)^{\rho-1}Q^{\rho(\rho-1)/2}=\bigg(\frac{V_\rho}{2}\bigg)^{\rho-1}
\prod\bigg(1-\frac{U_\rho}{V_\rho}\frac{V_t}{U_t}\bigg),
$$ so that 
\begin{equation}\label{eq:red}
(-1)^{\rho-1}Q^{\rho(\rho-1)/2}\equiv\bigg(\frac{V_\rho}{2}\bigg)^{\rho-1}
\bigg(1-\frac{U_\rho}{V_\rho}\Sigma_1+
\frac{U_\rho^2}{V_\rho^2}\Sigma_{1,1}-\frac{U_\rho^3}{V_\rho^3}\Sigma_{1,1,1}+
\frac{U_\rho^4}{V_\rho}\Sigma_{1,1,1,1}\bigg)\pmod {p^5}.
\end{equation}
Note that from (\ref{eq:red}) we recover the congruence
\begin{equation}\label{eq:blanc}
(-1)^{\rho-1}Q^{\rho(\rho-1)/2}\equiv\bigg(\frac{V_\rho}{2}\bigg)^{\rho-1}\pmod {p^2}.
\end{equation} 
Subtracting the expansion in (\ref{eq:red}) from that of $\binom{2\rho-1}{\rho-1}_U$ obtained in the proof 
of Theorem \ref{thm:pcinq1}, we find that 
\begin{eqnarray*}
\binom{2\rho-1}{\rho-1}_U-(-1)^{\rho-1}Q^{\rho(\rho-1)/2}
&\equiv&\bigg(\frac{V_\rho}{2}\bigg)^{\rho-1}\bigg(2\frac{U_\rho}{V_\rho}\Sigma_1+
2\frac{U_\rho^3}{V_\rho^3}\Sigma_{1,1,1}\bigg)\\
&\equiv & 2\bigg(\frac{V_\rho}{2}\bigg)^{\rho-1}\frac{U_\rho}{V_\rho}\Sigma_1\pmod {p^5},
\end{eqnarray*} since $\Sigma_{1,1,1}$ is $0\pmod {p^2}$ by Lemma \ref{lem:sigmas}.
In the above congruence as $\frac{U_\rho}{V_\rho}\Sigma_1$ is $0\pmod {p^3}$ we may,  
by (\ref{eq:blanc}), replace $\big(\frac{V_\rho}{2}\big)^{\rho-1}$ by  
$(-1)^{\rho-1}Q^{\rho(\rho-1)/2}$ and deduce our theorem.  
\end{proof}

\begin{lemma}\label{lem:c}  Suppose $\nu$ is a nonnegative integer. 
Let $p\ge\nu+5$ be a prime of maximal rank, say $\rho$. Then 
$$
\sum_{0<t<\rho}\frac{4Q^t}{U_t^2}\frac{V_t^\nu}{U_t^\nu}=\Sigma_{\nu+2}-D\Sigma_\nu\equiv
\begin{cases} 0\pmod{p^2},\;\text{ if }\nu\text{ is odd};\\
0\pmod p,\;\;\text{ if }\nu\text{ is even.}
\end{cases}
$$
\end{lemma}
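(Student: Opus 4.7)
My plan is to prove the two assertions of the lemma in order: first the algebraic identity $\sum_{0<t<\rho}\frac{4Q^t V_t^\nu}{U_t^{\nu+2}} = \Sigma_{\nu+2} - D\Sigma_\nu$, then the congruence.

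For the identity, I would use the companion relation (\ref{eq:3}), namely $V_t^2 - DU_t^2 = 4Q^t$. Dividing this by $U_t^{\nu+2}$ and multiplying by $V_t^\nu$ converts each term, termwise, into the summand on the left, split as a difference: $V_t^{\nu+2}/U_t^{\nu+2} - D\, V_t^\nu/U_t^\nu = 4Q^t V_t^\nu/U_t^{\nu+2}$. Summing over $t \in (0,\rho)$ is then immediate.

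For the congruence, I would invoke Lemma \ref{lem:psquare} on both $\Sigma_\nu$ and $\Sigma_{\nu+2}$. The hypothesis $p \geq \nu + 5$ is exactly what is needed to apply that lemma to $\Sigma_{\nu+2}$, which requires $p \geq (\nu+2) + 3$; the application to $\Sigma_\nu$ is then automatic. If $\nu$ is odd, so is $\nu+2$, and both sums are $\equiv 0 \pmod{p^2}$ individually, which yields the result. If $\nu$ is even, I would split on $\epsilon_p$: when $\epsilon_p = -1$ both sums vanish modulo $p$; when $\epsilon_p = 1$, Lemma \ref{lem:psquare} gives $\Sigma_\nu \equiv -2D^{\nu/2}$ and $\Sigma_{\nu+2} \equiv -2D^{(\nu+2)/2}$, and these cancel after the former is multiplied by $D$.

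The only delicate point, which I expect to be the main obstacle, is the exception flagged in the footnote to Lemma \ref{lem:psquare}: the case $\nu = 0$ with $\epsilon_p = 0$, where $\Sigma_0 = \rho - 1 \equiv -1 \pmod p$ rather than $0$. The resolution is that $\epsilon_p = 0$ forces $p \mid D$, so $D\Sigma_0 \equiv 0 \pmod p$ regardless of the value of $\Sigma_0$, while $\Sigma_2 \equiv 0 \pmod p$ still holds by Lemma \ref{lem:psquare}. Hence $\Sigma_2 - D\Sigma_0$ vanishes modulo $p$ and the congruence survives this edge case. All remaining even subcases with $\epsilon_p = 0$ and $\nu \geq 2$ are dispatched directly, since then both $\Sigma_\nu$ and $\Sigma_{\nu+2}$ are $\equiv 0 \pmod p$ by Lemma \ref{lem:psquare}.
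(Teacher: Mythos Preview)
Your proposal is correct and follows essentially the same route as the paper: derive the identity from $V_t^2-DU_t^2=4Q^t$, then apply Lemma~\ref{lem:psquare} to $\Sigma_\nu$ and $\Sigma_{\nu+2}$, splitting the even case according to $\epsilon_p$. Your treatment is in fact slightly more careful than the paper's, which folds the $\epsilon_p=0$, $\nu=0$ edge case silently into the assertion that ``$D\Sigma_\nu\equiv0\pmod p$ when $\rho$ is $p$ or $p+1$''; you make explicit that this relies on $p\mid D$ rather than on $\Sigma_0\equiv0$.
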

\begin{proof} We have 
$$
\sum_{0<t<\rho}\frac{4Q^t}{U_t^2}\frac{V_t^\nu}{U_t^\nu}=\sum_{0<t<\rho}
\frac{(V_t^2-DU_t^2)}{U_t^2}\frac{V_t^\nu}{U_t^\nu}=\Sigma_{\nu+2}-D\Sigma_\nu. 
$$
If $\nu$ is odd, then, $p\ge\nu+5$ implies, by Lemma \ref{lem:psquare}, 
that both $\Sigma_\nu$ and $\Sigma_{\nu+2}$ are $0\pmod{p^2}$. If $\nu$ is even, 
then both $\Sigma_{\nu+2}$ and $D\Sigma_\nu$ are $0\pmod p$, when $\rho$ is $p$ or $p+1$,  
by Lemma \ref{lem:psquare}. If $\rho$ is $p-1$, then by the same lemma 
$\Sigma_{\nu+2}-D\Sigma_\nu\equiv-2D^{\frac{\nu+2}2}-D(-2D^{\nu/2})\equiv0\pmod p$. 
\end{proof}

\begin{lemma}\label{lem:s1} We have for all primes $p\ge7$ of maximal rank $\rho$
$$
-2\Sigma_1\equiv\frac{U_\rho}{V_\rho}\sum_{0<t<\rho}\frac{4Q^t}{U_t^2}
\pmod {p^4}.
$$
\end{lemma}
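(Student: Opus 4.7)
The plan is to pair terms in $\Sigma_1$ and expand the result as a $p$-adically convergent geometric series whose leading term equals the claimed right-hand side, with the tail killed by powers of $U_\rho$.

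First, pair $t$ with $\rho - t$: by identity (\ref{eq:1}) we have $V_t U_{\rho-t} + V_{\rho-t} U_t = 2 U_\rho$, hence
$$
\Sigma_1 \;=\; \frac{1}{2} \sum_{0<t<\rho} \Big( \frac{V_t}{U_t} + \frac{V_{\rho-t}}{U_{\rho-t}} \Big) \;=\; U_\rho \sum_{0<t<\rho} \frac{1}{U_t U_{\rho-t}}.
$$
Next, from identity (\ref{eq:6}) with $s = \rho$, namely $2 Q^t U_{\rho-t} = U_\rho V_t - V_\rho U_t$, I would write
$$
\frac{1}{U_{\rho-t}} \;=\; -\frac{2 Q^t}{V_\rho U_t} \sum_{k \ge 0} \Big( \frac{U_\rho V_t}{V_\rho U_t} \Big)^{\!k},
$$
a $p$-adically convergent geometric series (legitimate since $p \mid U_\rho$ while $V_\rho$ and each $U_t$ with $0 < t < \rho$ are units mod $p$). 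Substituting, interchanging summations, and invoking the identity $\sum_{0<t<\rho} 4 Q^t V_t^k / U_t^{k+2} = \Sigma_{k+2} - D \Sigma_k$ of Lemma \ref{lem:c} (with the convention $\Sigma_0 = \rho - 1$), I obtain
$$
\Sigma_1 \;=\; -\sum_{k \ge 0} \frac{U_\rho^{k+1}}{2 V_\rho^{k+1}} \bigl( \Sigma_{k+2} - D \Sigma_k \bigr).
$$

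The $k = 0$ term is exactly $-\frac{U_\rho}{2 V_\rho} \sum_{0<t<\rho} \frac{4 Q^t}{U_t^2}$, which after doubling is the claimed right-hand side (up to sign). It remains to show that each $k \ge 1$ summand is $\equiv 0 \pmod{p^4}$. For $k = 1$, Lemma \ref{lem:c} at $\nu = 1$ gives $v_p(\Sigma_3 - D \Sigma_1) \ge 2$, and combined with $v_p(U_\rho^2) \ge 2$ the total valuation is $\ge 4$. For $k = 2$, Lemma \ref{lem:c} at $\nu = 2$ gives $v_p(\Sigma_4 - D \Sigma_2) \ge 1$, and combined with $v_p(U_\rho^3) \ge 3$ the total valuation is $\ge 4$. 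For $k \ge 3$, the factor $U_\rho^{k+1}$ alone contributes $v_p \ge 4$, so no further input is needed.

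The delicate step is the $k = 2$ term, where the hypothesis $p \ge 7$ of our lemma matches exactly the requirement $p \ge \nu + 5 = 7$ in Lemma \ref{lem:c} applied at $\nu = 2$; without this single extra factor of $p$ beyond the three supplied by $U_\rho^3$, the argument would fail. Doubling the resulting identity yields $-2 \Sigma_1 \equiv \frac{U_\rho}{V_\rho} \sum_{0<t<\rho} \frac{4 Q^t}{U_t^2} \pmod{p^4}$, as desired.
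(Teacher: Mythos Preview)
Your proof is correct and follows essentially the same route as the paper's: pair $t$ with $\rho-t$ via identity (\ref{eq:1}), rewrite $1/U_{\rho-t}$ using (\ref{eq:6}), expand as a geometric series in $U_\rho V_t/(V_\rho U_t)$, and kill the tail terms with Lemma~\ref{lem:c}. The only difference is cosmetic: the paper computes $-2\Sigma_1$ directly and truncates the bracket $[1+x+x^2]$ before invoking Lemma~\ref{lem:c} on the $\nu=1,2$ terms, whereas you write the full $p$-adic series and bound each $k\ge1$ summand separately.
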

\begin{proof} All sums are over an index $t$ running from $1$ to $\rho-1$. 
\begin{eqnarray*}
-2\Sigma_1&=&-\sum\bigg(\frac{V_t}{U_t}+\frac{V_{\rho-t}}{U_{\rho-t}}\bigg)=
-2U_\rho\sum\frac1{U_tU_{\rho-t}}, \text{ by }(\ref{eq:1}),\\
&=&-2U_\rho\sum\frac{2Q^t}{U_t(U_\rho V_t-U_tV_\rho)}, \text{ using }(\ref{eq:6}),\\
&=&2\frac{U_\rho}{V_\rho}\sum \frac{2Q^t}{U_t^2\big[1-\frac{V_t}{U_t}
\frac{U_\rho}{V_\rho}\big]}\\
&\equiv&\frac{U_\rho}{V_\rho}\sum\frac{4Q^t}{U_t^2}\bigg[1+\frac{V_t}{U_t}\frac{U_\rho}{V_\rho}
+\frac{V_t^2}{U_t^2}\frac{U_\rho^2}{V_\rho^2}\bigg]\pmod{p^4},
\end{eqnarray*}
because, by Lemma \ref{lem:c}, $U_\rho^{\nu+1}\sum\frac{4Q^t}{U_t^2}
\frac{V_t^\nu}{U_t^\nu}$ is $0\pmod{p^4}$, for $\nu=1$ and $\nu=2$, if $p\ge7$. 
\end{proof} 

 From Theorem \ref{thm:pcinq2}, it is not difficult to reach a third theorem that 
generalizes (\ref{eq:iii}).

\begin{theorem}\label{thm:pcinq3} Let $(U,V)$ be a pair of Lucas sequence with parameters $P$ and $Q$. 
Let $p$ be a prime at least $7$ of maximal rank $\rho$ equal to $p-\e_p$. Then
$$
\binom{2\rho-1}{\rho-1}_U\equiv
(-1)^{\e_p}Q^{\frac{\rho(\rho-1)}2}\bigg[1-4\frac{U_\rho^2}{V_\rho^2}\sum_{0<t<\rho}\frac{Q^t}{U_t^2}
\bigg]\pmod{p^5}.
$$ 
\end{theorem}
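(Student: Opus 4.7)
The plan is to derive the theorem as a direct corollary of Theorem \ref{thm:pcinq2} by rewriting its correction term by means of Lemma \ref{lem:s1}. Theorem \ref{thm:pcinq2} already places $\binom{2\rho-1}{\rho-1}_U$ modulo $p^5$ in the form $(-1)^{\e_p}Q^{\rho(\rho-1)/2}\bigl[1 + 2(U_\rho/V_\rho)\Sigma_1\bigr]$, and the target congruence has the same prefactor. Thus the whole proof reduces to translating the correction $2(U_\rho/V_\rho)\Sigma_1$ into the form $-4(U_\rho^2/V_\rho^2)\sum_{0<t<\rho} Q^t/U_t^2$ while preserving precision mod $p^5$.

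The translation is exactly the content of Lemma \ref{lem:s1}, which says $-2\Sigma_1 \equiv (U_\rho/V_\rho)\sum_{0<t<\rho} 4Q^t/U_t^2 \pmod{p^4}$. I would multiply both sides by $U_\rho/V_\rho$. Two observations justify that this upgrades the congruence to mod $p^5$: first, $p \mid U_\rho$ by the definition of $\rho$ as the rank of $p$ in $U$, so the multiplier contributes an extra factor of $p$; second, $V_\rho$ is a $p$-adic unit, since identity (\ref{eq:3}) at $t = \rho$ gives $V_\rho^2 \equiv 4Q^\rho \pmod{p}$, which is nonzero because $p \nmid Q$. Hence
$$
2\frac{U_\rho}{V_\rho}\Sigma_1 \equiv -4\frac{U_\rho^2}{V_\rho^2}\sum_{0<t<\rho}\frac{Q^t}{U_t^2} \pmod{p^5},
$$
and substituting into the bracket of Theorem \ref{thm:pcinq2} yields the claim.

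As the authors themselves anticipate, there is no substantial obstacle once Theorem \ref{thm:pcinq2} and Lemma \ref{lem:s1} are in hand. The only point requiring care is modulus bookkeeping: Lemma \ref{lem:s1} is stated at precision $p^4$, exactly one power short of the target, and that shortfall is recovered solely from the $p$-adic valuation of the multiplier $U_\rho$. Once this is tracked, the argument is a one-line substitution.
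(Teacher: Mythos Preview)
Your proposal is correct and follows essentially the same route as the paper: start from Theorem \ref{thm:pcinq2}, then use Lemma \ref{lem:s1} (multiplied through by $U_\rho/V_\rho$, gaining one power of $p$ since $p\mid U_\rho$ and $p\nmid V_\rho$) to replace $2\frac{U_\rho}{V_\rho}\Sigma_1$ by $-4\frac{U_\rho^2}{V_\rho^2}\sum_{0<t<\rho}\frac{Q^t}{U_t^2}$ modulo $p^5$. Your explicit verification that $V_\rho$ is a $p$-adic unit via identity (\ref{eq:3}) is a nice touch the paper leaves implicit.
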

\begin{proof} In the congruence for the Lucanomial $\binom{2\rho-1}{\rho-1}_U$ of Theorem \ref{thm:pcinq2}  
we may replace $2\frac{U_\rho}{V_\rho}\Sigma_1$ by $-\frac{U_\rho^2}{V_\rho^2}\sum\frac{4Q^t}{U_t^2}$ 
since by Lemma \ref{lem:s1} the two expressions are congruent modulo ${p^5}$. 
\end{proof}

\begin{remark} In stating Theorem \ref{thm:pcinq3} we chose the expression 
$-4\frac{U_\rho^2}{V_\rho^2}\sum\frac{Q^t}{U_t^2}$ 
rather than $-\frac{U_\rho^2}{V_\rho^2}\Sigma_2+\frac{U_\rho^2}{V_\rho^2}(\rho-1)D$ 
because it contains only one term; that term is $0\pmod{p^3}$ and it reduces to 
$-p^2\sum\frac1{t^2}$ for $U=U(2,1)$.  
\end{remark}

\begin{lemma}\label{lem:go} We have for all primes $p\ge7$ of maximal rank $\rho$
$$
\frac{U_\rho}{V_\rho}\Sigma_1\equiv\frac{U_\rho^2}{V_\rho^2}\Sigma_{1,1}-\frac12
\frac{U_\rho^2}{V_\rho^2}(\rho-1)D\pmod {p^5}.
$$
\end{lemma}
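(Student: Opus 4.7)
The plan is to refine the argument of Lemma~\ref{lem:s1} by extending its geometric-series expansion one order further, then multiplying through by $U_\rho/V_\rho$, and finally converting the resulting power-sum expression into the pair sum $\Sigma_{1,1}$ by a Newton identity.

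Starting from the identity
$$
-2\Sigma_1=\frac{U_\rho}{V_\rho}\sum_{0<t<\rho}\frac{4Q^t}{U_t^2}\cdot\frac{1}{1-\frac{V_t}{U_t}\frac{U_\rho}{V_\rho}}
$$
that appears inside the proof of Lemma~\ref{lem:s1} (the geometric factor makes $p$-adic sense since $U_\rho/V_\rho\equiv 0\pmod p$), I would expand term by term and recognize the inner sums via $V_t^2-DU_t^2=4Q^t$ as in Lemma~\ref{lem:c}, obtaining
$$
-2\Sigma_1=\sum_{\nu\ge 0}\frac{U_\rho^{\nu+1}}{V_\rho^{\nu+1}}\bigl(\Sigma_{\nu+2}-D\Sigma_\nu\bigr),\qquad\Sigma_0:=\rho-1.
$$
Multiplying both sides by $U_\rho/V_\rho$ and bounding the tail with Lemma~\ref{lem:c} (which gives $\Sigma_{\nu+2}-D\Sigma_\nu\equiv 0\pmod{p^2}$ for odd $\nu$ and $\equiv 0\pmod p$ for positive even $\nu$), every summand with $\nu\ge 1$ has $p$-adic valuation at least $5$: for $\nu=1$ the external factor contributes $3$ and the inner sum $2$; for $\nu=2$ the external factor contributes $4$ and the inner sum $1$; and for $\nu\ge 3$ one easily gets at least $6$. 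Hence modulo $p^5$ only the $\nu=0$ term survives, and
$$
-2\frac{U_\rho}{V_\rho}\Sigma_1\equiv\frac{U_\rho^2}{V_\rho^2}\bigl(\Sigma_2-(\rho-1)D\bigr)\pmod{p^5}.
$$

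To conclude, I would substitute $\Sigma_2=\Sigma_1^2-2\Sigma_{1,1}$, which is immediate from squaring the definition of $\Sigma_1$. Since $\Sigma_1\equiv 0\pmod{p^2}$ by Lemma~\ref{lem:psquare}, the cross term $\frac12\frac{U_\rho^2}{V_\rho^2}\Sigma_1^2$ has valuation at least $6$ and drops out modulo $p^5$, delivering the asserted congruence after a simple rearrangement.

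The main obstacle is the parity-sensitive valuation count for the $\nu=1,2$ terms of the expansion: it is exactly the extra factor of $p$ that odd-index sums pick up (inherited from $\Sigma_\nu\equiv 0\pmod{p^2}$ for odd $\nu$ in Lemma~\ref{lem:psquare}) that allows these two boundary terms to just clear the $p^5$ threshold; without this parity dichotomy in Lemma~\ref{lem:c}, the truncation error would leak into $p^4$ and the desired congruence would fail.
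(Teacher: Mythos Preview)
Your proof is correct and follows essentially the same route as the paper's: the paper simply cites Lemma~\ref{lem:s1} (multiplied through by $U_\rho/V_\rho$ to lift the modulus from $p^4$ to $p^5$) together with the $\nu=0$ case of Lemma~\ref{lem:c} to reach $-2\frac{U_\rho}{V_\rho}\Sigma_1\equiv\frac{U_\rho^2}{V_\rho^2}\bigl(\Sigma_2-(\rho-1)D\bigr)\pmod{p^5}$, and then applies the same Newton identity $\Sigma_2=\Sigma_1^2-2\Sigma_{1,1}\equiv-2\Sigma_{1,1}\pmod{p^4}$ that you invoke. Your version merely unrolls the truncation argument from the proof of Lemma~\ref{lem:s1} inline rather than quoting that lemma as a black box.
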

\begin{proof} By Lemma \ref{lem:s1}, we see that 
$$
\frac{U_\rho}{V_\rho}\Sigma_1\equiv-\frac12\frac{U_\rho^2}{V_\rho^2}\sum_{0<t<\rho}\frac{4Q^t}{U_t^2}\pmod {p^5}.
$$ By Lemma \ref{lem:c},  
$$
\sum_{0<t<\rho}\frac{4Q^t}{U_t^2}=\Sigma_2-D(\rho-1).
$$ Thus, as $\Sigma_2=\Sigma_1^2-2\Sigma_{1,1}\equiv-2\Sigma_{1,1}\pmod {p^4}$, 
the lemma follows. 
\end{proof}

 By using Lemma \ref{lem:go} and Theorem \ref{thm:pcinq2} we obtain another generalization of (\ref{eq:i}) 
slightly different from that given in Theorem \ref{thm:pcinq1}, which we now state. 

\begin{theorem}\label{thm:pcinq4} Let $(U,V)$ be a pair of Lucas sequences with parameters $P$ and $Q$. 
Let $p$ be a prime at least $7$ of maximal rank $\rho$ equal to $p-\e_p$. Then
$\binom{2\rho-1}{\rho-1}_U$ is congruent to
$$
(-1)^{\e_p}Q^{\frac{\rho(\rho-1)}2}\bigg[1+\frac{U_\rho}{V_\rho}\sum_{0<t<\rho}\frac{V_t}{U_t}+
\frac{U_\rho^2}{V_\rho^2}\sum_{0<s<t<\rho}\frac{V_sV_t}{U_sU_t}-\frac12D
\frac{U_\rho^2}{V_\rho^2}(\rho-1)\bigg]\pmod{p^5}.
$$
\end{theorem}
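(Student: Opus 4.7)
The plan is to obtain this congruence as an almost immediate corollary of Theorem \ref{thm:pcinq2} and Lemma \ref{lem:go}, by rewriting half of the linear $\Sigma_1$ term in the two-term expansion so as to display the $\Sigma_{1,1}$ contribution explicitly.

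First I would start from Theorem \ref{thm:pcinq2}, which gives modulo $p^5$ the compact expansion
$$
\binom{2\rho-1}{\rho-1}_U\equiv(-1)^{\e_p}Q^{\rho(\rho-1)/2}\bigg[1+2\frac{U_\rho}{V_\rho}\Sigma_1\bigg].
$$
The only adjustment needed is inside the bracket. I would split the linear term as
$$
2\frac{U_\rho}{V_\rho}\Sigma_1=\frac{U_\rho}{V_\rho}\Sigma_1+\frac{U_\rho}{V_\rho}\Sigma_1,
$$
keeping the first copy as it appears in the desired formula, and then substituting for the second copy using Lemma \ref{lem:go}, which states
$$
\frac{U_\rho}{V_\rho}\Sigma_1\equiv\frac{U_\rho^2}{V_\rho^2}\Sigma_{1,1}-\frac12D\frac{U_\rho^2}{V_\rho^2}(\rho-1)\pmod{p^5}.
$$
Combining these two observations gives
$$
2\frac{U_\rho}{V_\rho}\Sigma_1\equiv\frac{U_\rho}{V_\rho}\Sigma_1+\frac{U_\rho^2}{V_\rho^2}\Sigma_{1,1}-\frac12D\frac{U_\rho^2}{V_\rho^2}(\rho-1)\pmod{p^5},
$$
and plugging this into the expansion from Theorem \ref{thm:pcinq2} yields the stated congruence once we unfold the definitions of $\Sigma_1$ and $\Sigma_{1,1}$ from (\ref{eq:nota}).

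There is no real obstacle here: the substantive work is done by Theorem \ref{thm:pcinq2} and by Lemma \ref{lem:go} (which itself rested on Lemma \ref{lem:s1} and Lemma \ref{lem:c}). The only thing to check is that both replacements are justified modulo $p^5$, which is precisely the modulus at which Lemma \ref{lem:go} was proved, so no loss of precision occurs. Thus the result follows by a one-line substitution.
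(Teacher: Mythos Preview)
Your proposal is correct and follows exactly the route indicated in the paper: the text preceding Theorem~\ref{thm:pcinq4} simply says that the result is obtained by combining Theorem~\ref{thm:pcinq2} with Lemma~\ref{lem:go}, and your splitting of $2\frac{U_\rho}{V_\rho}\Sigma_1$ into two copies, replacing one via Lemma~\ref{lem:go}, is precisely that combination.
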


 We end the paper with a congruence for $\binom{2\rho-1}{\rho-1}_U$ modulo ${p^6}$. It generalizes 
Theorem 2.4 of \cite{Tau} which says that   
$$
\binom{2p-1}{p-1}\equiv1+2p\sum_{0<t<p}\frac1t+\frac{2p^3}3\sum_{0<t<p}\frac1{t^3}\pmod{p^6},
$$ 
for all primes $p\ge7$, and also generalizes our Theorem \ref{thm:pcinq2}. 

\begin{theorem} Let $(U,V)$ be a pair of Lucas sequences with parameters $P$ and $Q$. 
Let $p$ be a prime at least $7$ of maximal rank $\rho$. Then 
$$
\binom{2\rho-1}{\rho-1}_U\equiv(-1)^{\rho-1}Q^{\frac{\rho(\rho-1)}2}\bigg[1+2\frac{U_\rho}
{V_\rho}\sum_{0<t<p}\frac{V_t}{U_t}+
\frac23\frac{U_\rho^3}{V_\rho^3}\sum_{0<t<p}\frac{V_t^3}{U_t^3}\bigg]\pmod{p^6}.
$$
\end{theorem}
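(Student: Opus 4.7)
The plan is to extend the ``global'' method of Theorem~\ref{thm:pcinq2} one prime-power further by invoking the $p$-adic logarithm. Set $x:=U_\rho/V_\rho$, which is $p$-adically integral with $v_p(x)\ge 1$, since $p\mid U_\rho$ and $p\nmid V_\rho$. Exactly as in the opening of the proof of Theorem~\ref{thm:pcinq2}, the addition formula~(\ref{eq:1}) and the identity~(\ref{eq:6}) (with $s=\rho$) yield the twin factorisations
$$\binom{2\rho-1}{\rho-1}_U=\bigg(\frac{V_\rho}{2}\bigg)^{\rho-1}\prod_{0<t<\rho}\bigg(1+x\,\frac{V_t}{U_t}\bigg), \qquad (-1)^{\rho-1}Q^{\frac{\rho(\rho-1)}{2}}=\bigg(\frac{V_\rho}{2}\bigg)^{\rho-1}\prod_{0<t<\rho}\bigg(1-x\,\frac{V_t}{U_t}\bigg).$$
The second quantity is a $p$-adic unit since $p\nmid Q$, so dividing and taking the $p$-adic logarithm (which converges term-by-term because each $xV_t/U_t$ has positive valuation), and using $\log(1+y)-\log(1-y)=2\sum_{k\ge0}y^{2k+1}/(2k+1)$, collapses the product into the odd symbols $\Sigma_{2k+1}$:
$$\log\Bigg(\frac{\binom{2\rho-1}{\rho-1}_U}{(-1)^{\rho-1}Q^{\rho(\rho-1)/2}}\Bigg)=2\sum_{k\ge 0}\frac{x^{2k+1}}{2k+1}\,\Sigma_{2k+1}.$$

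Next comes a valuation count via Lemma~\ref{lem:psquare} and its complement~(\ref{eq:+}). The $k=0$ and $k=1$ contributions have $p$-adic valuations at least $3$ and $5$ respectively, using $v_p(\Sigma_1)\ge 2$ and $v_p(\Sigma_3)\ge 2$ (both available for $p\ge 7$). For $k\ge 2$ the factor $x^{2k+1}$ already contributes valuation at least $5$; combined with $p\mid \Sigma_{2k+1}$ from~(\ref{eq:+}), every tail term is $\equiv 0 \pmod{p^6}$. The two tight spots at the smallest admissible prime $p=7$, namely $k=2$ (where $\Sigma_5$ is only known to vanish $\pmod p$) and $k=3$ (where $2k+1=p$ introduces a $1/p$ denominator), are both rescued by $v_p(x^{2k+1})\ge 2k+1$. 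Modulo $p^6$ the logarithm therefore simplifies to $2(U_\rho/V_\rho)\Sigma_1+\tfrac{2}{3}(U_\rho/V_\rho)^3\Sigma_3$.

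Finally, this simplified logarithm has $p$-adic valuation at least $3$, so its square has valuation at least $6$ and higher powers vanish modulo $p^6$; the $p$-adic exponential thus reduces to $1+\log$, and multiplying through by $(-1)^{\rho-1}Q^{\rho(\rho-1)/2}$ yields the claimed congruence. The main obstacle is really just the valuation bookkeeping on the tail of the log series at the smallest admissible prime $p=7$; the algebraic heart of the argument is already contained in the proof of Theorem~\ref{thm:pcinq2}.
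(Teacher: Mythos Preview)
Your proof is correct and takes a genuinely different route from the paper's argument. The paper proceeds by first establishing $\Sigma_{1,1,1,1,1}\equiv0\pmod{p^2}$ through a chain of Newton-type identities among the elementary symmetric sums $\Sigma_{1,4}$, $\Sigma_{1,1,3}$, $\Sigma_{2,3}$, $\Sigma_{1,2,2}$, $\Sigma_{1,1,1,2}$; this lets the finite expansions of $\binom{2\rho-1}{\rho-1}_U$ and of $(-1)^{\rho-1}Q^{\rho(\rho-1)/2}$ (both in the basis $1,\Sigma_1,\Sigma_{1,1},\dots$) be pushed from modulus $p^5$ to $p^6$ and then subtracted. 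After that the paper still has to replace $(V_\rho/2)^{\rho-1}$ by $(-1)^{\rho-1}Q^{\rho(\rho-1)/2}$, which forces a case split on $\epsilon_p$ because $\Sigma_{1,1}$ is only $D\pmod p$ when $\epsilon_p=1$, and finally solve a $2\times2$ linear system (again case by case) to convert $\Sigma_{1,1,1}$ into $\Sigma_3$ (plus a $D\Sigma_1$ correction in the $\epsilon_p=1$ case).

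Your $p$-adic logarithm eliminates all of this at once: dividing the two exact product identities before expanding cancels the common factor $(V_\rho/2)^{\rho-1}$ (so no case analysis on $\epsilon_p$ is needed), and the identity $\log\frac{1+y}{1-y}=2\sum_{k\ge0}y^{2k+1}/(2k+1)$ kills every even-degree contribution, so only the odd power sums $\Sigma_{2k+1}$ enter and the elementary symmetric $\Sigma_{1,\dots,1}$ never appear. Your valuation bookkeeping for the tail (including the delicate points $k=2,3$ at $p=7$, handled by (\ref{eq:+}) and by $v_p(x^{2k+1})\ge 2k+1$) is correct, and the passage from $\exp(L)\equiv 1+L$ is legitimate since $v_p(L)\ge3$. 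The trade-off is that the paper's argument stays entirely within elementary congruence manipulations, whereas yours imports a small amount of $p$-adic analysis; in return your method is shorter, uniform in $\epsilon_p$, and visibly extends to higher moduli simply by retaining more terms of the log series.
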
 
\begin{proof} We proceed as in Lemma \ref{lem:sigmas} to show that $\Sigma_{1,1,1,1,1}\equiv0\pmod{p}$ 
(in fact $0$ modulo $p^2$). 
First we extend the definitions made before Lemma \ref{lem:sigmas} to define analogously 
the sums $\Sigma_{1,4}$, $\Sigma_{1,1,3}$, $\Sigma_{2,3}$, $\Sigma_{1,2,2}$ and $\Sigma_{1,1,1,2}$.  
The expressions $\Sigma_1\cdot\Sigma_4-\Sigma_5$, $\Sigma_3\cdot\Sigma_{1,1}$, $\Sigma_1\cdot\Sigma_{1,3}$ 
and $\Sigma_1\cdot\Sigma_{2,2}$ are all $0\pmod{p^2}$, so we deduce, successively, that the  
sums $\Sigma_{1,4}$, $\Sigma_{1,1,3}$, $\Sigma_{2,3}$ and $\Sigma_{1,2,2}$ are each $0\pmod{p^2}$. 
Therefore, modulo $p^2$,  the two expressions $\Sigma_1\cdot\Sigma_{1,1,1,1}$ 
and $\Sigma_1^5-\Sigma_5$ are linear combinations of $\Sigma_{1,1,1,1,1}$ and $\Sigma_{1,1,1,2}$.  
Because these two expressions are each $0\pmod{p^2}$ we deduce that  
$\Sigma_{1,1,1,1,1}\equiv0\pmod{p^2}$.

\medskip

Since $\Sigma_{1,1,1,1,1}$ is 
$0\pmod p$, both the congruence for $\binom{2\rho-1}{\rho-1}_U$, derived from the proof 
of Theorem \ref{thm:pcinq1}, and congruence (\ref{eq:red}) remain valid when we raise  
the modulus from $p^5$ to $p^6$. 
Hence, 
\begin{equation}\label{eq:vent}
\binom{2\rho-1}{\rho-1}_U-(-1)^{\rho-1}Q^{\rho(\rho-1)/2}
\equiv\bigg(\frac{V_\rho}{2}\bigg)^{\rho-1}\bigg(2\frac{U_\rho}{V_\rho}\Sigma_1+
2\frac{U_\rho^3}{V_\rho^3}\Sigma_{1,1,1}\bigg)\pmod {p^6}. 
\end{equation}
Suppose first that $\epsilon_p=-1$ or $\epsilon_p=0$. Then, as $\Sigma_{1,1}\equiv0\pmod p$, 
we find that (\ref{eq:blanc}) is valid modulo $p^3$. Thus, we may replace $(V_\rho/2)^{\rho-1}$ 
in (\ref{eq:vent}) by $(-1)^{\rho-1}Q^{\rho(\rho-1)/2}$ and obtain that 
\begin{equation}\label{eq:eau}
\binom{2\rho-1}{\rho-1}_U\equiv(-1)^{\rho-1}Q^{\rho(\rho-1)/2}
\bigg(1+2\frac{U_\rho}{V_\rho}\Sigma_1+
2\frac{U_\rho^3}{V_\rho^3}\Sigma_{1,1,1}\bigg)\pmod {p^6}. 
\end{equation} Looking at the linear system at the start of the proof of Lemma \ref{lem:sigmas} modulo 
$p^3$ we find the system of congruences
\begin{eqnarray*} 
3\Sigma_{1,2}+6\Sigma_{1,1,1} & \equiv & -\Sigma_3,\\
\Sigma_{1,2}+3\Sigma_{1,1,1} & \equiv &0. 
\end{eqnarray*}
Solving for $\Sigma_{1,1,1}$, we see that $\Sigma_{1,1,1}\equiv\frac{\Sigma_3}{3}\pmod{p^3}$, 
which inserted in congruence (\ref{eq:eau}) yields the theorem. 

Suppose now $\epsilon_p=1$ so that congruence (\ref{eq:blanc}), when the modulus is increased to $p^3$, 
becomes 
$$
(-1)^{\rho-1}Q^{\rho(\rho-1)/2}\equiv(V_\rho/2)^{\rho-1}(1+DU_\rho^2/V_\rho^2)
\pmod {p^3}.
$$
Thus we may replace $(V_\rho/2)^{\rho-1}$ in (\ref{eq:vent}) by $(-1)^{\rho-1}Q^{\rho(\rho-1)/2}
(1-DU_\rho^2/V_\rho^2)$, multiply out the resulting expression and remove the term 
in $U_\rho^5\Sigma_{1,1,1}$ which is $0\pmod {p^7}$ to find that 
$$
\binom{2\rho-1}{\rho-1}_U\equiv(-1)^{\rho-1}Q^{\rho(\rho-1)/2}\bigg(1+2\frac{U_\rho}{V_\rho}\Sigma_1
+2\frac{U_\rho^3}{V_\rho^3}(\Sigma_{1,1,1}-D\Sigma_1)\bigg)\pmod {p^6}.
$$
Because $\Sigma_1$ is $0\pmod{p^2}$  and $\Sigma_{1,1}\equiv D\pmod{p}$, the linear system 
of Lemma \ref{lem:sigmas} taken modulo $p^3$ is 
\begin{eqnarray*} 
3\Sigma_{1,2}+6\Sigma_{1,1,1} & \equiv & -\Sigma_3,\\
\Sigma_{1,2}+3\Sigma_{1,1,1} & \equiv & D\Sigma_1. 
\end{eqnarray*}
Solving for $\Sigma_{1,1,1}$ yields $\Sigma_{1,1,1}\equiv D\Sigma_1+\Sigma_3/3$ and  
the theorem holds.  
\end{proof} 

\section{Appendix on the integrality of Lucanomials}

 The question of the integrality of Lucanomials has appeared in various places, but 
we want to formally prove that with convention (\ref{eq:conv}) they are integral  
in full generality.  

\begin{prop} Let $U=(U_n)$ be a Lucas sequence with parameters $P$ and $Q$. With 
the adoption of convention (\ref{eq:conv}) the Lucanomial coefficients 
$\binom{m}{n}_U$ are rational integers for all nonnegative integers $m$ and $n$. 
\end{prop}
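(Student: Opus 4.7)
My plan is to treat the non-degenerate and degenerate Lucas sequences separately, connected by the Pascal-like identity
\[
\binom{m}{k}_U \;=\; U_{m-k+1}\binom{m-1}{k-1}_U \;-\; Q\,U_{k-1}\binom{m-1}{k}_U,
\]
which is an algebraic rearrangement of the universal identity $U_m=U_{m-k+1}U_k-QU_{m-k}U_{k-1}$ (itself easily verified via the Binet form $U_t=(\alpha^t-\beta^t)/(\alpha-\beta)$). Deriving the Pascal identity from the universal identity requires dividing by $U_k$, so it is unambiguously valid whenever $U_k\ne0$. When $U$ is non-degenerate, so that $U_j\ne0$ for every $j\ge1$, the Pascal identity applies to every Lucanomial without any appeal to convention~(\ref{eq:conv}), and a routine induction on $m$ with boundary cases $\binom{m}{0}_U=1$ and $\binom{0}{k}_U=0$ (for $k>0$) propagates integrality to all $\binom{m}{k}_U$. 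This argument is indifferent to which primes divide $Q$ or $\gcd(P,Q)$, and in particular handles the non-degenerate cases where the convexity property~(\ref{eq:convex}) is not known to apply.

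Suppose instead that $U$ is degenerate, so that $\alpha/\beta$ is a primitive $r$-th root of unity for some $r\in\{2,3,4,6\}$. Then $U_n=0$ iff $r\mid n$, while $U_{kr+s}=c^kU_s$ for all $k\ge0$ and $0\le s<r$, where $c:=\alpha^r=\beta^r$ is a rational algebraic integer and hence lies in $\Z$. The number of multiples of $r$ in $[m-n+1,m]$ equals $\lfloor m/r\rfloor-\lfloor(m-n)/r\rfloor$, which by the super-additivity $\lfloor a+b\rfloor\ge\lfloor a\rfloor+\lfloor b\rfloor$ is at least the count $\lfloor n/r\rfloor$ of multiples of $r$ in $[1,n]$. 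If that inequality is strict, convention~(\ref{eq:conv}) forces $\binom{m}{n}_U=0\in\Z$; otherwise (the \emph{balanced} case) I would write $n=jr+n_0$ and $m-n=hr+h_0$ with $0\le n_0,h_0<r$ and note that balancedness is equivalent to $n_0+h_0<r$, in which case $m_0:=m\bmod r$ is exactly $n_0+h_0$. Substituting $U_t=c^{\lfloor t/r\rfloor}U_{t\bmod r}$ into every surviving factor of the numerator and denominator and tracking the indices by residue class modulo $r$, one finds that the expression collapses to
\[
\binom{m}{n}_U \;=\; c^{\sigma}\binom{m_0}{n_0}_U,
\]
where $\sigma$ is a nonnegative integer (non-negativity being yet another application of the super-additivity of $\lfloor\cdot\rfloor$). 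Since $m_0<r\le6$, the residual Lucanomial $\binom{m_0}{n_0}_U$ involves only nonzero $U$-values, so the Pascal identity applies to it without any division by zero and the induction of the previous paragraph shows it is an integer. Hence $\binom{m}{n}_U\in\Z$ in every case.

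I expect the balanced-case identification $\binom{m}{n}_U=c^{\sigma}\binom{m_0}{n_0}_U$ to be the most delicate step: one must carefully match the pattern of signed excesses of numerator over denominator indices in each nonzero residue class modulo $r$ with the shape of the residual Lucanomial, and verify the explicit formula for $\sigma$ along the way. All other ingredients---the Pascal identity, the structural relation $U_{kr+s}=c^kU_s$, and the super-additivity of $t\mapsto\lfloor t/r\rfloor$---are formal.
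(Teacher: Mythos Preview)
Your proposal is correct. The non-degenerate case is handled exactly as in the paper: both you and the author invoke the Pascal-type recurrence coming from the identity $U_m=U_{m-k+1}U_k-QU_{m-k}U_{k-1}$ (the paper writes it with the index shift $k\leftrightarrow n+1$) and run the induction on $m$.

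The degenerate case is where the two approaches diverge. The paper treats each value $\rho(\infty)\in\{2,3,4,6\}$ separately: for each it writes out the first few terms of $U$, identifies explicit formulas for $|U_t|$ (e.g.\ $|U_t|=P^{t-1}$ when $\rho(\infty)=3$, or $|U_t|=2^{\lfloor t/2\rfloor}(P')^{t-1}$ when $\rho(\infty)=4$), and then argues that the relevant prime-power exponents in $U_t$ are nondecreasing in $t$, so that after assuming $m\ge 2n$ the numerator dominates the denominator factor by factor. You instead observe once and for all that $U_{kr+s}=c^kU_s$ with $c=\alpha^r=\beta^r\in\Z$, count zeros via super-additivity of the floor, and in the balanced case reduce the Lucanomial to $c^\sigma\binom{m_0}{n_0}_U$ with $m_0<r$, where the residual factor is a genuine Lucanomial over nonzero terms and hence integral by the Pascal induction. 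Your route is more uniform and structurally cleaner---one argument covers all four values of $r$ and makes the exponent $\sigma$ explicit---whereas the paper's case analysis is more concrete but correspondingly more ad hoc. The residue-by-residue bookkeeping you flag as ``most delicate'' does go through: for each nonzero residue $s$ the counts of numerator and denominator terms differ by $0$ or $\pm1$ in exactly the pattern needed to rebuild $\binom{m_0}{n_0}_U$, and the $c$-exponent contributed by each residue is a nonnegative multiple of $h$ or $h+1$.
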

\begin{proof} If all $U_n$, $n>0$, are nonzero then the frequently used induction argument 
(see \cite{HuSu}, Lemma 1; or \cite{Ben}) based on the general Lucas identity 
$U_{n+1}U_{m-n}-QU_nU_{m-n-1}=U_m$ works fine. 
(The induction is on $m$. So one proves the integrality of the Lucanomial $\binom{m}{n}_U$ 
for $m>n\ge1$ by observing that
\begin{eqnarray*}
U_{n+1}\binom{m-1}{n}_U-QU_{m-n-1}\binom{m-1}{n-1}_U & = &\\
\bigg(U_{n+1}\frac{U_{m-n}}{U_n}-QU_{m-n-1}\bigg)\cdot\binom{m-1}{n-1}_U & = &\\
\frac{U_m}{U_n}\cdot\binom{m-1}{n-1}_U=\binom{m}{n}_U&,&
\end{eqnarray*} 
completing the induction.)
If some term $U_n$, $n\ge1$, is $0$ then $U$ is degenerate and, as we saw early in 
Section \ref{sec:2}, $\rho(\infty)\in\{2,3,4,6\}$, where $\rho(\infty)$ is the least 
positive integer $t$ such that $U_t=0$. Note that we may always assume $m\ge2n$. Thus the 
Lucanomial $\binom{m}{n}_U$ is the quotient of a product of $n$ consecutive $U$ terms 
of indices all larger than $n$ divided by $U_nU_{n-1}\cdots U_1$. 
If $\rho(\infty)=2$, i.e., $U_2=P=0$, then 
$U_{2k+1}=(-1)^kQ^k$ and $U_{2k}=0$, ($k\ge0$). Then $\binom{m}{n}_U$ is up to sign 
a positive power of $Q$. If $\rho(\infty)=3$, then, as $U_3=P^2-Q$, the first few terms of $U$ are 
$0,1,P,0,-P^3,-P^4,0,P^6,P^7,0,\cdots$. So $|U_t|=P^{t-1}$ if $3\nmid t$. 
If $\rho(\infty)=4$, then, as $U_4=P^3-2PQ$ and $P\not=0$, 
$P^2=2Q$ and we see that $|U_t|=2^{\lfloor t/2\rfloor}(P')^{t-1}$ if $4\nmid t$, 
where $P=2P'$. Omitting the $0$ terms when $4\mid t$, powers of $2$ and $P'$ 
in $U_t$ are nondecreasing functions of $t$. A similar result holds for $\rho(\infty)$ equal to 
$6$ when $P^2=3Q$ and, omitting terms divisible by $6$, powers of $3$ and of $P'$ in $U_t$ 
are nondecreasing functions of $t$, where in this case $P=3P'$. The integrality of the Lucanomials 
follows readily.  
\end{proof}

-----------------------------------------------------------------------------------

2010 {\it Mathematics Subject Classification}: 11A07, 11B65, 11B39.

{\it Keywords}: generalized binomial coefficients, Wolstenholme's congruence, Lucas sequences, rank of appearance.

\end{document}